\newtheorem{thm}{Theorem}[section]
\newtheorem{prop}[thm]{Proposition}
\newtheorem{lemma}[thm]{Lemma}
\newtheorem{cor}[thm]{Corollary}
\newtheorem{remark}[thm]{Remark}
\newtheorem{example}[thm]{Example}
\newtheorem{problem}[thm]{Problem}
\numberwithin{equation}{section}
\def\eps{\varepsilon}
\def\bC{\mathbb{C}}
\def\bR{\mathbb{R}}
\def\Tr{\mathrm{Tr}\,}
\def\bM{\mathbb{M}}
\def\bN{\mathbb{N}}
\def\id{\mathrm{id}}
\def\OM{\mathrm{OM}}
\def\ffi{\varphi}
\def\bx{\mathbf{x}}
\def\by{\mathbf{y}}
\begin{document}
\baselineskip=16pt
\allowdisplaybreaks

\centerline{\LARGE A generalization of Araki's log-majorization}
\bigskip
\bigskip
\centerline{\Large
Fumio Hiai\footnote{{\it E-mail address:} hiai.fumio@gmail.com}}

\medskip
\begin{center}
$^1$\,Tohoku University (Emeritus), \\
Hakusan 3-8-16-303, Abiko 270-1154, Japan
\end{center}

\medskip
\begin{abstract}
We generalize Araki's log-majorization to the log-convexity theorem for the eigenvalues of
$\Phi(A^p)^{1/2}\Psi(B^p)\Phi(A^p)^{1/2}$ as a function of $p\ge0$, where $A,B$ are positive
semidefinite matrices and $\Phi,\Psi$ are positive linear maps between matrix algebras.
A similar generalization of the log-majorization of Ando-Hiai type is given as well.

\bigskip\noindent
{\it 2010 Mathematics Subject Classification:}
Primary 15A42, 15A60, 47A30

\bigskip\noindent
{\it Key words and phrases:}
Matrices, Log-majorization, Log-supermajorization, Operator mean, Weighted geometric mean, 
Unitarily invariant norm, Symmetric norm, Symmetric anti-norm
\end{abstract}

\section{Introduction}

The Lieb-Thirring inequality \cite{LT} and its extension by Araki \cite{Ar} are regarded as a
strengthening of the celebrated Golden-Thompson trace inequality, which can be written, as
explicitly stated in \cite{AH}, in terms of log-majorization
\begin{equation}\label{F-1.1}
(A^{1/2}BA^{1/2})^r\prec_{(\log)}A^{r/2}B^rA^{r/2},\qquad r\ge1,
\end{equation}
for matrices $A,B\ge0$. Here, for $n\times n$ matrices $X,Y\ge0$, the log-majorization
$X\prec_{(\log)}Y$ means that
$$
\prod_{i=1}^k\lambda_i(X)\le\prod_{i=1}^k\lambda_i(Y),\qquad k=1,\dots,n
$$
with equality for $k=n$, where $\lambda_1(X)\ge\dots\ge\lambda_n(X)$ are the eigenvalues of $X$
arranged in decreasing order and counting multiplicities. The weak log-majorization
$X\prec_{w(\log)}Y$ is referred to when the last equality is not imposed. A concise survey of
majorization for matrices is found in, e.g., \cite{An2} (also \cite{Hi1,Hi2}).

In the present paper we generalize the log-majorization in \eqref{F-1.1} to the log-convexity
of the function
$$
p\in[0,\infty)\longmapsto\lambda\bigl(\Phi(A^p)^{1/2}\Psi(B^p)\Phi(A^p)^{1/p}\bigr)
$$
in the sense of weak log-majorization order, involving positive linear maps $\Phi,\Psi$ between
matrix algebras. More precisely, in Theorem \ref{T-3.1} of Section 3, we prove the weak
log-majorization
\begin{align}
&\lambda\bigl(\Phi(A^{p_\alpha})^{1/2}\Psi(B^{p_\alpha})\Phi(A^{p_\alpha})^{1/2}\bigr)
\nonumber\\
&\quad\prec_{w(\log)}
\lambda^{1-\alpha}\bigl(\Phi(A^{p_0})^{1/2}\Psi(B^{p_0})\Phi(A^{p_0})^{1/2}\bigr)
\lambda^\alpha\bigl(\Phi(A^{p_1})^{1/2}\Psi(B^{p_1})\Phi(A^{p_1})^{1/2}\bigr), \label{F-1.2}
\end{align}
where $p_\alpha:=(1-\alpha)p_0+\alpha p_1$ for $0\le\alpha\le1$. In particular, when
$\Phi=\Psi=\id$ and $(p_0,p_1)=(0,1)$, \eqref{F-1.2} reduces to
\begin{equation}\label{F-1.3}
\lambda(A^{\alpha/2}B^\alpha A^{\alpha/2})\prec_{w(\log)}
\lambda^\alpha(A^{1/2}BA^{1/2}),\qquad0\le\alpha\le1,
\end{equation}
which is equivalent to \eqref{F-1.1} by letting $\alpha=1/r$ and replacing $A,B$ with
$A^r,B^r$. In Section 2 we show an operator norm inequality in a more general setting
by a method using operator means. In Section 3 we extend this inequality to the weak
log-majorization \eqref{F-1.2} by applying the well-known antisymmetric tensor power technique.

The recent paper of Bourin and Lee \cite{BL} contains, as a consequence of their joint
log-convexity theorem for a two-variable norm function, the weak log-majorization 
$$
(A^{1/2}Z^*BZA^{1/2})^r\prec_{w(\log)}A^{r/2}Z^*B^rZA^{r/2},\qquad r\ge1,
$$
which is closely related to ours, as explicitly mentioned in Remark \ref{R-3.6} of Section 3.

The complementary Golden-Thompson inequality was first shown in \cite{HP} and then it was
extended in \cite{AH} to the log-majorization
$$
A^r\,\#_\alpha\,B^r\prec_{(\log)}(A\,\#_\alpha\,B)^r,\qquad r\ge1,
$$
where $\#_\alpha$ is the weighted geometric mean for $0\le\alpha\le1$. In a more recent paper
\cite{Wa} the class of operator means $\sigma$ for which
$\lambda_1(A^r\,\sigma\,B^r)\le\lambda_1^r(A\,\sigma\,B)$ holds for all $r\ge1$ was
characterized in terms of operator monotone functions representing $\sigma$. In Section 4 of
the paper, we show some generalizations of these results in \cite{AH,Wa} in a somewhat similar
way to that of Araki's log-majorization in Sections 2 and 3.

\section{Operator norm inequalities}

For $n\in\bN$ we write $\bM_n$ for the $n\times n$ complex matrix algebra and $\bM_n^+$ for the
$n\times n$ positive semidefinite matrices. For $A\in\bM_n$ we write $A\ge0$ if $A\in\bM_n^+$,
and $A>0$ if $A$ is positive definite, i.e., $A\ge0$ and $A$ is invertible. The operator norm
and the usual trace of $A\in\bM_n$ is denoted by $\|A\|_\infty$ and $\Tr A$, respectively.

We denote by $\OM_{+,1}$ the set of non-negative operator monotone functions $f$ on $[0,\infty)$
such that $f(1)=1$. In theory of operator means due to Kubo and Ando \cite{KA}, a main result
says that each operator mean $\sigma$ is associated with an $f\in\OM_{+,1}$ in such a way that
$$
A\,\sigma\,B:=A^{1/2}f(A^{-1/2}BA^{-1/2})A^{1/2}
$$
for $A,B\in\bM_n^+$ with $A>0$, which is further extended to general $A,B\in\bM_n^+$ as
$$
A\,\sigma\,B:=\lim_{\eps\searrow0}(A+\eps I_n)\,\sigma\,(B+\eps I_n).
$$
We write $\sigma_f$ for the operator mean associated with $f\in\OM_{+,1}$. For $0\le\alpha\le1$,
the operator mean corresponding to the function $x^\alpha$ in $\OM_{+,1}$ is called the
{\it weighted geometric mean} denoted by $\#_\alpha$; more explicitly,
$$
A\,\#_\alpha\,B=A^{1/2}(A^{-1/2}BA^{-1/2})^\alpha A^{1/2}
$$
for $A,B\in\bM_n^+$ with $A>0$. The case $\alpha=1/2$ is the {\it geometric mean} $\#$, first
introduced by Pusz and Woronowicz \cite{PW}. Let $\sigma_f^*$ be the adjoint of $\sigma_f$,
i.e., the operator mean corresponding to $f^*\in\OM_{+,1}$ defined as $f^*(x):=f(x^{-1})^{-1}$,
$x>0$.

A linear map $\Phi:\bM_n\to\bM_l$ is said to be positive if $\Phi(A)\in\bM_l^+$ for all
$A\in\bM_n^+$, which is furthermore said to be strictly positive if $\Phi(I_n)>0$, that is,
$\Phi(A)>0$ for all $A\in\bM_n$ with $A>0$. In the rest of the paper, we throughout assume
that $\Phi:\bM_n\to\bM_l$ and $\Psi:\bM_m\to\bM_l$ are positive linear maps. Recall the
well-known fact, essentially due to Ando \cite{An1}, that
$$
\Phi(A\,\sigma\,B)\le\Phi(A)\,\sigma\,\Phi(B)
$$
for all $A,B\in\bM_n^+$ and for any operator mean $\sigma$. This will be repeatedly used
without reference in the sequel.

For non-negative functions $\ffi_0$ and $\ffi_1$ on $[0,\infty)$ a new non-negative function
$\ffi:=\ffi_0\,\sigma_f\,\ffi_1$ on $[0,\infty)$ is defined as
$$
\ffi(x)=\ffi_0(x)\,\sigma_f\,\ffi_1(x)
=\lim_{\eps\searrow0}(\ffi_0(x)+\eps)f\biggl({\ffi_1(x)+\eps\over\ffi_0(x)+\eps}\biggr),
\qquad x\in[0,\infty).
$$

\begin{prop}\label{P-2.1}
Let $f\in\OM_{+,1}$. Let $\ffi_0$ and $\ffi_1$ be arbitrary non-negative functions on
$[0,\infty)$ and define the functions $\ffi:=\ffi_0\,\sigma_f\,\ffi_1$ and
$\widetilde\ffi:=\ffi_0\,\sigma_f^*\,\ffi_1$ on $[0,\infty)$ as above. Then for every
$A\in\bM_n^+$ and $B\in\bM_m^+$,
\begin{align*}
&\big\|\Phi(\widetilde\ffi(A))^{1/2}\Psi(\ffi(B))
\Phi(\widetilde\ffi(A))^{1/2}\big\|_\infty \\
&\quad\le\max\bigl\{\big\|\Phi(\ffi_0(A))^{1/2}\Psi(\ffi_0(B))
\Phi(\ffi_0(A))^{1/2}\big\|_\infty,
\big\|\Phi(\ffi_1(A))^{1/2}\Psi(\ffi_1(B))
\Phi(\ffi_1(A))^{1/2}\big\|_\infty\bigr\}.
\end{align*}
\end{prop}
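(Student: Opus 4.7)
Let $M$ denote the maximum on the right-hand side. Assuming temporarily that $A,B>0$ and that $\Phi(\ffi_i(A))$ and $\Phi(\widetilde\ffi(A))$ are invertible, I will reduce the claim to the L\"owner-order inequality $\Phi(\widetilde\ffi(A))^{1/2}\Psi(\ffi(B))\Phi(\widetilde\ffi(A))^{1/2}\le MI_l$, and establish this via the chain
\begin{align*}
\Psi(\ffi(B))
&\le\Psi(\ffi_0(B))\,\sigma_f\,\Psi(\ffi_1(B))\\
&\le M\bigl(\Phi(\ffi_0(A))^{-1}\,\sigma_f\,\Phi(\ffi_1(A))^{-1}\bigr)\\
&=M\bigl(\Phi(\ffi_0(A))\,\sigma_f^*\,\Phi(\ffi_1(A))\bigr)^{-1}\\
&\le M\,\Phi(\widetilde\ffi(A))^{-1}.
\end{align*}

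The first step is Ando's operator-mean inequality $\Psi(X\,\sigma_f\,Y)\le\Psi(X)\,\sigma_f\,\Psi(Y)$, recalled in Section~2, applied to $X=\ffi_0(B)$, $Y=\ffi_1(B)$. For the second, I rewrite the hypothesis $\|\Phi(\ffi_i(A))^{1/2}\Psi(\ffi_i(B))\Phi(\ffi_i(A))^{1/2}\|_\infty\le M$ as $\Psi(\ffi_i(B))\le M\,\Phi(\ffi_i(A))^{-1}$ for $i=0,1$, and then use monotonicity together with positive homogeneity of $\sigma_f$. The third step is the Kubo--Ando adjoint identity $X^{-1}\sigma_f Y^{-1}=(X\,\sigma_f^*\,Y)^{-1}$ (valid for $X,Y>0$, and following from $f^*(x)=f(x^{-1})^{-1}$). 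The final step applies Ando's inequality again, now to $\sigma_f^*$ and $\Phi$, giving $\Phi(\widetilde\ffi(A))=\Phi(\ffi_0(A)\,\sigma_f^*\,\ffi_1(A))\le\Phi(\ffi_0(A))\,\sigma_f^*\,\Phi(\ffi_1(A))$, and then uses that inversion reverses the order. Multiplying the resulting bound $\Psi(\ffi(B))\le M\,\Phi(\widetilde\ffi(A))^{-1}$ by $\Phi(\widetilde\ffi(A))^{1/2}$ on both sides produces the desired norm estimate.

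\textbf{Main obstacle.} The chief technical nuisance is handling the case where $\Phi(\ffi_i(A))$ or $\Phi(\widetilde\ffi(A))$ fails to be invertible, which is needed for the pivot $\|X^{1/2}YX^{1/2}\|_\infty\le M\iff Y\le MX^{-1}$ and for the adjoint identity. I expect to dispose of this by a standard perturbation: replace $\Phi$ by $\Phi_\eps(T):=\Phi(T)+\eps(\Tr T)I_l$ and $\Psi$ similarly (both remain positive linear maps), and replace $A,B$ by $A+\eps I_n$, $B+\eps I_m$. This renders every required matrix strictly positive; the continuity of the operator-mean construction (via the limiting definition $X\,\sigma\,Y=\lim_{\eps\searrow0}(X+\eps I)\,\sigma\,(Y+\eps I)$ recalled in Section~2) together with continuity of the operator norm then permits passage to the limit $\eps\searrow0$ and recovers the general inequality.
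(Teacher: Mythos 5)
Your main chain of inequalities is exactly the paper's argument: Ando's inequality for $\Psi$ and $\sigma_f$, the pivot $\Psi(\ffi_k(B))\le M\,\Phi(\ffi_k(A))^{-1}$, the adjoint identity $X^{-1}\,\sigma_f\,Y^{-1}=(X\,\sigma_f^*\,Y)^{-1}$, and Ando's inequality once more for $\Phi$ and $\sigma_f^*$ followed by order reversal under inversion. That part is correct and is the same route the paper takes.

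The gap is in your regularization. The functions $\ffi_0,\ffi_1$ are \emph{arbitrary} non-negative functions on $[0,\infty)$: they may vanish at some points and need not be continuous. Consequently, replacing $A$ by $A+\eps I_n$ does nothing to ensure that $\ffi_k(A+\eps I_n)$ is invertible; if $\ffi_0$ happens to vanish at every eigenvalue of $A+\eps I_n$, then $\ffi_0(A+\eps I_n)=0$ and hence $\Phi_\eps(\ffi_0(A+\eps I_n))=0$ as well, since adding $\eps\Tr(\cdot)I_l$ to $\Phi$ does not help when the argument is the zero matrix. Worse, because $\ffi_k$ need not be continuous, you cannot conclude that $\ffi_k(A+\eps I_n)\to\ffi_k(A)$ as $\eps\searrow0$, so the limiting step at the end of your plan collapses. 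The correct perturbation, which the paper uses, keeps $A$ and $B$ fixed and perturbs the \emph{functions}: set $\ffi_{k,\eps}(x):=\ffi_k(x)+\eps$, so that $\ffi_{k,\eps}(A)=\ffi_k(A)+\eps I_n\ge\eps I_n>0$ and the convergence $\ffi_{k,\eps}(A)\to\ffi_k(A)$ is trivial; combined with $\Phi_\eps$ and $\Psi_\eps$ this renders every matrix in the chain strictly positive, and the passage to the limit is legitimate because $(\ffi_{0,\eps}\,\sigma_f\,\ffi_{1,\eps})(B)\to\ffi(B)$ by the defining limit of operator means. With that replacement your proof is complete.
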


\begin{proof}
Letting
$$
\gamma_k:=\big\|\Phi(\ffi_k(A))^{1/2}\Psi(\ffi_k(B))
\Phi(\ffi_k(A))^{1/2}\big\|_\infty,\qquad k=0,1,
$$
we may prove that
\begin{equation}\label{F-2.1}
\Phi(\widetilde\ffi(A))^{1/2}\Psi(\ffi(B))\Phi(\widetilde\ffi(A))^{1/2}\le
\max\{\gamma_0,\gamma_1\}I_l.
\end{equation}
First, assume that $\Phi$ and $\Psi$ are strictly positive and $\ffi_0(x),\ffi_1(x)>0$ for any
$x\ge0$. Then $\gamma_0,\gamma_1>0$, and we have
$$
\Psi(\ffi_k(B))\le\gamma_k\Phi(\ffi_k(A))^{-1},\qquad k=0,1.
$$
Since $\ffi(B)=\ffi_0(B)\,\sigma_f\,\ffi_1(B)$ and
$\widetilde\ffi(A)=\ffi_0(A)\,\sigma_f^*\,\ffi_1(A)$, by the joint monotonicity of $\sigma_f$
we have
\begin{align}
\Psi(\ffi(B))&\le\Psi(\ffi_0(B))\,\sigma_f\,\Psi(\ffi_1(B)) \nonumber\\
&\le\bigl(\gamma_0\Phi(\ffi_0(A))^{-1}\bigr)\,\sigma_f\,
\bigl(\gamma_1\Phi(\ffi_1(A))^{-1}\bigr) \nonumber\\
&\le\max\{\gamma_0,\gamma_1\}\bigl\{\Phi(\ffi_0(A))\,\sigma_f^*\,\Phi(\ffi_1(A))\bigr\}^{-1}
\nonumber\\
&\le\max\{\gamma_0,\gamma_1\}\Phi\bigl(\ffi_0(A)\,\sigma_f^*\,\ffi_1(A)\bigr)^{-1}
\nonumber\\
&=\max\{\gamma_0,\gamma_1\}\Phi(\widetilde\ffi(A))^{-1}, \label{F-2.2}
\end{align}
which implies \eqref{F-2.1} under the assumptions given above.

For the general case, for every $\eps>0$ we define a strictly positive $\Phi_\eps:\bM_n\to\bM_l$
by
$$
\Phi_\eps(X):=\Phi(X)+\eps\Tr(X)I_l.
$$
and similarly $\Psi_\eps:\bM_m\to\bM_l$. Moreover let $\ffi_{k,\eps}(x):=\ffi_k(x)+\eps$,
$k=0,1$, for $x\ge0$, and $\ffi_\eps:=\ffi_{0,\eps}\,\sigma_f\,\ffi_{1,\eps}$,
$\widetilde\ffi_\eps:=\ffi_{0,\eps}\,\sigma_f^*\,\ffi_{1,\eps}$. By the above case we then
have
\begin{equation}\label{F-2.3}
\Phi_\eps(\widetilde\ffi_\eps(A))^{1/2}\Psi_\eps(\ffi_\eps(B))
\Phi_\eps(\widetilde\ffi_\eps(A))^{1/2}
\le\max\{\gamma_{0,\eps},\gamma_{1,\eps}\}I_l,
\end{equation}
where
$$
\gamma_{k,\eps}:=\big\|\Phi_\eps(\ffi_{k,\eps}(A))^{1/2}\Psi_\eps(\ffi_{k,\eps}(B))
\Phi_\eps(\ffi_{k,\eps}(A))^{1/2}\big\|_\infty,\qquad k=0,1.
$$
Since $\widetilde\ffi_\eps(A)\to\widetilde\ffi(A)$, $\ffi_\eps(B)\to\ffi(B)$ and
$\gamma_{k,\eps}\to\gamma_k$, $k=0,1$,  as $\eps\searrow0$, we have \eqref{F-2.1} in the
general case by taking the limit of \eqref{F-2.3}.
\end{proof}

For non-negative functions $\ffi_0,\ffi_1$ the function $\ffi_0^{1-\alpha}\ffi_1^\alpha$ with
$0\le\alpha\le1$ is often called the {\it geometric bridge} of $\ffi_0,\ffi_1$, for which we
have

\begin{prop}\label{P-2.2}
Let $\ffi_0,\ffi_1$ be arbitrary non-negative functions on $[0,\infty)$ and $0\le\alpha\le1$.
Define $\ffi_\alpha(x):=\ffi_0(x)^{1-\alpha}\ffi_1(x)^\alpha$ on $[0,\infty)$ (with convention
$0^0:=1$). Then for every $A\in\bM_n^+$ and $B\in\bM_m^+$,
\begin{align*}
&\big\|\Phi(\ffi_\alpha(A))^{1/2}\Psi(\ffi_\alpha(B))
\Phi(\ffi_\alpha(A))^{1/2}\big\|_\infty \\
&\quad\le\big\|\Phi(\ffi_0(A))^{1/2}\Psi(\ffi_0(B))
\Phi(\ffi_0(A))^{1/2}\big\|_\infty^{1-\alpha}
\big\|\Phi(\ffi_1(A))^{1/2}\Psi(\ffi_1(B))
\Phi(\ffi_1(A))^{1/2}\big\|_\infty^\alpha.
\end{align*}
\end{prop}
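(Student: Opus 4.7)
The plan is to deduce Proposition~\ref{P-2.2} from Proposition~\ref{P-2.1} applied to the weighted geometric mean, combined with a homogeneity rescaling. For $f(x) = x^\alpha$ one has $f^*(x) = f(x^{-1})^{-1} = x^\alpha$, so $\sigma_f = \sigma_f^* = \#_\alpha$, and the limit formula of Section~2 evaluated on non-negative scalars $a, b$ gives
$$
a\,\sigma_f\,b \;=\; \lim_{\eps\searrow0}(a+\eps)^{1-\alpha}(b+\eps)^\alpha \;=\; a^{1-\alpha}b^\alpha
$$
(with the convention $0^0 = 1$). Applied pointwise, this shows $\ffi_0\,\sigma_f\,\ffi_1 = \ffi_0\,\sigma_f^*\,\ffi_1 = \ffi_\alpha$; hence Proposition~\ref{P-2.1} immediately yields the max-bound
$$
\gamma_\alpha \;\le\; \max\{\gamma_0, \gamma_1\},
$$
where I abbreviate $\gamma_k := \|\Phi(\ffi_k(A))^{1/2}\Psi(\ffi_k(B))\Phi(\ffi_k(A))^{1/2}\|_\infty$ for $k \in \{0, 1, \alpha\}$.

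The key step is to upgrade this max-bound to the geometric bridge via rescaling. For parameters $c_0, c_1 > 0$, apply the preceding bound with $\ffi_k$ replaced by $c_k\ffi_k$: the geometric bridge becomes $c_0^{1-\alpha}c_1^\alpha\,\ffi_\alpha$, and linearity of $\Phi, \Psi$ scales the associated operator-norm quantities by $c_k^2$ for $k = 0, 1$ and by $c_0^{2(1-\alpha)}c_1^{2\alpha}$ for $k = \alpha$ (since $\Phi(c\ffi_k(A))^{1/2} = c^{1/2}\Phi(\ffi_k(A))^{1/2}$ and $\Psi(c\ffi_k(B)) = c\Psi(\ffi_k(B))$). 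Thus
$$
c_0^{2(1-\alpha)}c_1^{2\alpha}\,\gamma_\alpha \;\le\; \max\{c_0^2\gamma_0,\,c_1^2\gamma_1\}.
$$
Choosing $c_k := (\gamma_k + \eps)^{-1/2}$ for arbitrary $\eps > 0$ forces the right-hand side to be at most $1$, so
$$
\gamma_\alpha \;\le\; (\gamma_0 + \eps)^{1-\alpha}(\gamma_1 + \eps)^\alpha,
$$
and letting $\eps \searrow 0$ completes the argument.

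The only step that deserves care is the identification $\ffi_0\,\sigma_f\,\ffi_1 = \ffi_\alpha$ at points where one or both of $\ffi_0, \ffi_1$ vanish, where the convention $0^0 = 1$ precisely matches the $\eps \searrow 0$ limit built into the definition of $\sigma_f$. Beyond this bookkeeping, there is no genuine obstacle: the $\eps$-regularization in the last step handles the degenerate cases $\gamma_0 = 0$ or $\gamma_1 = 0$ uniformly, and the endpoints $\alpha \in \{0, 1\}$ are trivial. The essential content is that Proposition~\ref{P-2.1} at $f(x) = x^\alpha$ already encodes the \emph{max}-version of the desired inequality, and scaling converts \emph{max} into the homogeneous geometric product $(\,\cdot\,)^{1-\alpha}(\,\cdot\,)^\alpha$.
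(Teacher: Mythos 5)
Your proof is correct, but it reaches the geometric-mean bound by a different mechanism than the paper. The paper does not invoke Proposition \ref{P-2.1} as a black box; it reopens its proof and improves the chain of operator inequalities leading to \eqref{F-2.2}, using the joint homogeneity of the operator mean $\#_\alpha$ on positive matrices, namely $(\gamma_0 X_0)\,\#_\alpha\,(\gamma_1 X_1)=\gamma_0^{1-\alpha}\gamma_1^\alpha\,(X_0\,\#_\alpha\,X_1)$, to obtain directly $\Psi(\ffi_\alpha(B))\le\gamma_0^{1-\alpha}\gamma_1^\alpha\,\Phi(\ffi_\alpha(A))^{-1}$ in place of the cruder $\max\{\gamma_0,\gamma_1\}$ bound. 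You instead keep Proposition \ref{P-2.1} intact and exploit only the scalar homogeneity of the geometric bridge together with the linearity of $\Phi,\Psi$: replacing $\ffi_k$ by $c_k\ffi_k$ rescales $\gamma_k$ by $c_k^2$ and $\gamma_\alpha$ by $c_0^{2(1-\alpha)}c_1^{2\alpha}$, and optimizing $c_0,c_1$ converts the max into the product $\gamma_0^{1-\alpha}\gamma_1^\alpha$ (your $\eps$-regularization correctly handles $\gamma_k=0$, and your identification $\ffi_0\,\sigma_{x^\alpha}\,\ffi_1=\ffi_\alpha$ under the convention $0^0:=1$ is the same preliminary observation the paper makes). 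The trade-off: the paper's route gives the stronger operator inequality $\Phi(\ffi_\alpha(A))^{1/2}\Psi(\ffi_\alpha(B))\Phi(\ffi_\alpha(A))^{1/2}\le\gamma_0^{1-\alpha}\gamma_1^\alpha I_l$ as a byproduct and stays entirely inside the operator-mean calculus, while your homogenization argument is more modular and would upgrade any max-type bound with the right scaling behavior to a log-convexity statement, without needing access to the internals of the earlier proof.
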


\begin{proof}
When $f(x):=x^\alpha=f^*(x)$ where $0\le\alpha\le1$, note that
$\ffi_\alpha=\ffi_0\,\sigma_f\,\ffi_1=\ffi_0\,\sigma_f^*\,\ffi_1$. With the same notation
as in the proof of Proposition \ref{P-2.1}, inequality \eqref{F-2.2} is improved in the present
case as
$$
\Psi(\ffi_\alpha(B))\le\gamma_0^{1-\alpha}\gamma_1^\alpha\Phi(\ffi_\alpha(A))^{-1}
$$
for every $\alpha\in[0,1]$. Hence the asserted inequality follows as in the above proof.
\end{proof}

In particular, when $\ffi_0(x)=1$ and $\ffi_1(x)=x$, since $\ffi_0\,\sigma_f\,\ffi_1=f$ and
$\ffi_0\,\sigma_f^*\,\ffi_1=f^*$ in Proposition \ref{P-2.1}, we have

\begin{cor}\label{C-2.3}
Assume that $\Phi(I_n)^{1/2}\Psi(I_m)\Phi(I_n)^{1/2}\le I_l$. If $A\in\bM_n^+$ and
$B\in\bM_m^+$ satisfy $\Phi(A)^{1/2}\Psi(B)\Phi(A)^{1/2}\le I_l$, then
\begin{equation}\label{F-2.4}
\Phi(f^*(A))^{1/2}\Psi(f(B))\Phi(f^*(A))^{1/2}\le I_l
\end{equation}
for every $f\in\OM_{+,1}$, and in particular,
$$
\Phi(A^\alpha)^{1/2}\Psi(B^\alpha)\Phi(A^\alpha)^{1/2}\le I_l,\qquad0\le\alpha\le1.
$$
\end{cor}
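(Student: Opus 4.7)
The plan is to derive this corollary as a direct specialization of Proposition \ref{P-2.1} to the case $\ffi_0(x)\equiv 1$ and $\ffi_1(x)=x$ on $[0,\infty)$. With these choices, unwinding the definition of the geometric-mean-composition of functions gives
$$
(\ffi_0\,\sigma_f\,\ffi_1)(x)=1\cdot f(x/1)=f(x),\qquad
(\ffi_0\,\sigma_f^*\,\ffi_1)(x)=f^*(x),
$$
so that $\ffi=f$ and $\widetilde\ffi=f^*$ in the notation of Proposition \ref{P-2.1}. Moreover $\ffi_0(A)=I_n$, $\ffi_0(B)=I_m$, $\ffi_1(A)=A$ and $\ffi_1(B)=B$ by the functional calculus.

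With this substitution, Proposition \ref{P-2.1} yields
$$
\big\|\Phi(f^*(A))^{1/2}\Psi(f(B))\Phi(f^*(A))^{1/2}\big\|_\infty\le\max\{\gamma_0,\gamma_1\},
$$
where
$$
\gamma_0=\big\|\Phi(I_n)^{1/2}\Psi(I_m)\Phi(I_n)^{1/2}\big\|_\infty,\qquad
\gamma_1=\big\|\Phi(A)^{1/2}\Psi(B)\Phi(A)^{1/2}\big\|_\infty.
$$
The two hypotheses $\Phi(I_n)^{1/2}\Psi(I_m)\Phi(I_n)^{1/2}\le I_l$ and $\Phi(A)^{1/2}\Psi(B)\Phi(A)^{1/2}\le I_l$ are precisely the assertions $\gamma_0\le 1$ and $\gamma_1\le 1$, so $\max\{\gamma_0,\gamma_1\}\le 1$. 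Translating the resulting operator-norm bound back to an operator inequality on the positive semidefinite matrix on the left-hand side gives \eqref{F-2.4}.

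For the concluding special case, take $f(x)=x^\alpha$ with $0\le\alpha\le 1$, which belongs to $\OM_{+,1}$. A short calculation shows $f^*(x)=f(x^{-1})^{-1}=(x^{-\alpha})^{-1}=x^\alpha=f(x)$, so $f=f^*$, and \eqref{F-2.4} becomes the stated inequality $\Phi(A^\alpha)^{1/2}\Psi(B^\alpha)\Phi(A^\alpha)^{1/2}\le I_l$. There is essentially no obstacle here beyond correctly identifying $1\,\sigma_f\,x$ with $f(x)$; the whole content is packaged into Proposition \ref{P-2.1}.
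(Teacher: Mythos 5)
Your proof is correct and is essentially identical to the paper's: the corollary is obtained by specializing Proposition \ref{P-2.1} to $\ffi_0(x)=1$, $\ffi_1(x)=x$, so that $\ffi_0\,\sigma_f\,\ffi_1=f$ and $\ffi_0\,\sigma_f^*\,\ffi_1=f^*$, and then using that both hypotheses say $\gamma_0,\gamma_1\le1$. The special case via $f(x)=x^\alpha$ with $f^*=f$ also matches the paper.
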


\begin{remark}\label{R-2.4}\rm
Assume that both $\Phi$ and $\Psi$ are sub-unital, i.e., $\Phi(I_n)\le I_l$ and
$\Psi(I_m)\le I_l$. If $A\in\bM_n^+$ and $B\in\bM_m^+$ satisfy
$\Phi(A)^{1/2}\Psi(B)\Phi(A)^{1/2}\le I_l$, then one can see \eqref{F-2.4} in a simpler way as
follows: By continuity, one can assume that $\Phi$ is strictly positive and $A>0$; then
$$
\Psi(f(B))\le f(\Psi(B))\le f(\Phi(A)^{-1})=f^*(\Phi(A))^{-1}\le\Phi(f^*(A))^{-1}.
$$
The above first and the last inequalities holds by the Jensen inequality due to
\cite[Theorem 2.1]{Ch} and \cite[Theorem 2.1]{HaPe}. The merit of our method with use of the
operator mean $\sigma_f$ is that it enables us to relax the sub-unitality assumption into
$\Phi(I_n)^{1/2}\Psi(I_m)\Phi(I_n)^{1/2}\le I_l$.
\end{remark}

\section{Log-majorization}

When $\ffi_0$ and $\ffi_1$ are power functions, we can extend Proposition \ref{P-2.2} to the
log-majorization result in the next theorem. For $A\in\bM_n^+$ we write
$\lambda(A)=(\lambda_1(A),\dots,\lambda_n(A))$ for the eigenvalues of $A$ arranged in decreasing
order with multiplicities. Also, for $X\in\bM_n$ let $s(X)=(s_1(X),\dots,s_n(X))$ be the
singular values of $X$ in decreasing order with multiplicities. For two non-negative vectors
$a=(a_1,\dots,a_n)$ and $b=(b_1,\dots,b_n)$ where $a_1\ge\dots\ge a_n\ge0$ and
$b_1\ge\dots\ge b_n\ge0$, the {\it weak log-majorization} (or the {\it log-submajorization})
$a\prec_{w(\log)}b$ means that
\begin{equation}\label{F-3.1}
\prod_{i=1}^ka_i\le\prod_{i=1}^kb_i,\qquad1\le k\le n,
\end{equation}
and the {\it log-majorization} $a\prec_{(\log)}b$ means that $a\prec_{w(\log)}b$ and equality
hold for $k=n$ in \eqref{F-3.1}. On the other hand, the {\it log-supermajorization}
$a\prec^{w(\log)}b$ is defined as
$$
\prod_{i=n+1-k}^na_i\ge\prod_{i=n-k+1}^nb_i,\qquad1\le k\le n.
$$

\begin{thm}\label{T-3.1}
Let $p_0,p_1\in[0,\infty)$ and $0\le\alpha\le1$, and let $p_\alpha:=(1-\alpha)p_0+\alpha p_1$.
Then for every $A\in\bM_n^+$ and $B\in\bM_m^+$,
\begin{align}
&\lambda\bigl(\Phi(A^{p_\alpha})^{1/2}\Psi(B^{p_\alpha})\Phi(A^{p_\alpha})^{1/2}\bigr)
\nonumber\\
&\quad\prec_{w(\log)}
\lambda^{1-\alpha}\bigl(\Phi(A^{p_0})^{1/2}\Psi(B^{p_0})\Phi(A^{p_0})^{1/2}\bigr)
\lambda^\alpha\bigl(\Phi(A^{p_1})^{1/2}\Psi(B^{p_1})\Phi(A^{p_1})^{1/2}\bigr), \label{F-3.2}
\end{align}
or equivalently,
\begin{align}
&s\bigl(\Phi(A^{p_\alpha})^{1/2}\Psi(B^{p_\alpha})^{1/2}\bigr) \nonumber\\
&\qquad\prec_{w(\log)}s^{1-\alpha}\bigl(\Phi(A^{p_0})^{1/2}\Psi(B^{p_0})^{1/2}\bigr)
s^\alpha\bigl(\Phi(A^{p_1})^{1/2}\Psi(B^{p_1})^{1/2}\bigr). \label{F-3.3}
\end{align}
In particular, for every $A,B\in\bM_n^+$,
\begin{equation}\label{F-3.4}
s(A^{p_\alpha}B^{p_\alpha})\prec_{(\log)}
s^{1-\alpha}(A^{p_0}B^{p_0})s^\alpha(A^{p_1}B^{p_1}).
\end{equation}
\end{thm}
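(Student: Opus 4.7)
The plan is to reduce everything to the operator-norm case of Proposition~\ref{P-2.2} by means of the $k$-th antisymmetric tensor power, essentially rerunning the proof of Proposition~\ref{P-2.1} at the $\wedge^k$ level. For $k=1$, inequality \eqref{F-3.2} is precisely Proposition~\ref{P-2.2} with $\ffi_0(x)=x^{p_0}$ and $\ffi_1(x)=x^{p_1}$, since the geometric bridge $\ffi_\alpha=\ffi_0^{1-\alpha}\ffi_1^\alpha$ equals $x^{p_\alpha}$. For general $k$, writing $M_j:=\Phi(A^{p_j})^{1/2}\Psi(B^{p_j})\Phi(A^{p_j})^{1/2}$ and $\tilde\gamma_j:=\|M_j^{\wedge k}\|_\infty=\prod_{i=1}^k\lambda_i(M_j)$, the $k$-th product inequality in \eqref{F-3.2} amounts to $\|M_\alpha^{\wedge k}\|_\infty\le\tilde\gamma_0^{1-\alpha}\tilde\gamma_1^\alpha$, and this is what I would establish next.

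Assuming first that $\Phi,\Psi$ are strictly positive and $A,B>0$, the multiplicativity $(XY)^{\wedge k}=X^{\wedge k}Y^{\wedge k}$ and $(X^{1/2})^{\wedge k}=(X^{\wedge k})^{1/2}$ for $X\ge0$ let us rewrite $M_j^{\wedge k}=(\Phi(A^{p_j})^{\wedge k})^{1/2}\Psi(B^{p_j})^{\wedge k}(\Phi(A^{p_j})^{\wedge k})^{1/2}$, whence the definition of $\tilde\gamma_j$ gives $\Psi(B^{p_j})^{\wedge k}\le\tilde\gamma_j(\Phi(A^{p_j})^{\wedge k})^{-1}$ for $j=0,1$. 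Since $A^{p_0},A^{p_1}$ commute we have $A^{p_0}\,\#_\alpha\,A^{p_1}=A^{p_\alpha}$ and similarly for $B$, so Ando's inequality applied to $\Psi$ yields $\Psi(B^{p_\alpha})\le\Psi(B^{p_0})\,\#_\alpha\,\Psi(B^{p_1})$. I would then take $\wedge^k$ of both sides and use two structural properties of $\wedge^k$ on $\bM^+$, namely Loewner monotonicity and the identity $(X\,\#_\alpha\,Y)^{\wedge k}=X^{\wedge k}\,\#_\alpha\,Y^{\wedge k}$, together with the joint monotonicity and scalar homogeneity of $\#_\alpha$, to deduce
\[
\Psi(B^{p_\alpha})^{\wedge k}\le\tilde\gamma_0^{1-\alpha}\tilde\gamma_1^\alpha\bigl[\Phi(A^{p_0})^{\wedge k}\,\#_\alpha\,\Phi(A^{p_1})^{\wedge k}\bigr]^{-1}.
\]
Applying Ando's inequality now to $\Phi$ and again taking $\wedge^k$ gives $\Phi(A^{p_0})^{\wedge k}\,\#_\alpha\,\Phi(A^{p_1})^{\wedge k}\ge\Phi(A^{p_\alpha})^{\wedge k}$, so after inverting and conjugating by $(\Phi(A^{p_\alpha})^{\wedge k})^{1/2}$ one arrives at $M_\alpha^{\wedge k}\le\tilde\gamma_0^{1-\alpha}\tilde\gamma_1^\alpha I$, as desired. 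The non-strict case is handled by the same perturbation $\Phi_\eps$, $\Psi_\eps$, $A+\eps I$, $B+\eps I$ used in the proof of Proposition~\ref{P-2.1}.

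The equivalence of \eqref{F-3.3} and \eqref{F-3.2} follows from $s_i(X)^2=\lambda_i(XX^*)$ applied to $X=\Phi(A^{p_\alpha})^{1/2}\Psi(B^{p_\alpha})^{1/2}$, whose $XX^*$ is precisely $M_\alpha$. For \eqref{F-3.4}, specialising \eqref{F-3.3} to $\Phi=\Psi=\id$ and substituting $A\mapsto A^2$, $B\mapsto B^2$ gives the weak log-majorization $s(A^{p_\alpha}B^{p_\alpha})\prec_{w(\log)}s^{1-\alpha}(A^{p_0}B^{p_0})s^\alpha(A^{p_1}B^{p_1})$; the upgrade to full log-majorization comes from the observation that for invertible $A,B$ both sides have total product $(\det A)^{p_\alpha}(\det B)^{p_\alpha}$, with the general case following by continuity.

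The main obstacle I expect lies in the two $\wedge^k$-compatibility facts used in the central step: the Loewner monotonicity $A\le B\Rightarrow A^{\wedge k}\le B^{\wedge k}$ on $\bM^+$, which I would derive from the factorisation $A=B^{1/2}CB^{1/2}$ with $0\le C\le I$ together with $C^{\wedge k}\le I$ (since the eigenvalues of $C^{\wedge k}$ are products of those of $C$ and hence lie in $[0,1]$); and the identity $(X\,\#_\alpha\,Y)^{\wedge k}=X^{\wedge k}\,\#_\alpha\,Y^{\wedge k}$, which follows from multiplicativity of $\wedge^k$ applied to the Kubo-Ando formula $X\,\#_\alpha\,Y=X^{1/2}(X^{-1/2}YX^{-1/2})^\alpha X^{1/2}$. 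Once these are in place, the whole argument is a faithful re-enactment of the proof of Proposition~\ref{P-2.1} one floor up in the exterior-algebra tower.
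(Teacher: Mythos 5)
Your proof is correct, but the key lifting step is organized genuinely differently from the paper's. The paper first reduces to the case where $\Phi,\Psi$ are completely positive (by composing with the trace-preserving conditional expectation onto $C^*(I,A)$), then forms the compressed tensor-power maps $\Phi^{(k)}:=P_\wedge\Phi^{\otimes k}(\cdot)P_\wedge$ and $\Psi^{(k)}$, which are positive linear maps satisfying $\Phi^{(k)}((A^{\otimes k})^p)=\Phi(A^p)^{\wedge k}$, and simply reapplies the $k=1$ case (Proposition \ref{P-2.2}) to $A^{\otimes k},B^{\otimes k}$ with these new maps; the identity $\lambda_1(X^{\wedge k})=\prod_{i=1}^k\lambda_i(X)$ then yields \eqref{F-3.5}. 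You instead rerun the Proposition \ref{P-2.1}/\ref{P-2.2} argument directly on the matrices $\Phi(A^{p_j})^{\wedge k}$, $\Psi(B^{p_j})^{\wedge k}$, which requires the two compatibility facts you isolate: L\"owner monotonicity of $X\mapsto X^{\wedge k}$ on $\bM^+$ and $(X\,\#_\alpha\,Y)^{\wedge k}=X^{\wedge k}\,\#_\alpha\,Y^{\wedge k}$. Both are true (your sketches are sound; the second also uses $(Z^\alpha)^{\wedge k}=(Z^{\wedge k})^\alpha$, immediate by diagonalization), and the rest of your chain --- Ando's inequality for $\Psi$ and $\Phi$ applied to the commuting pair $A^{p_0},A^{p_1}$, joint monotonicity and homogeneity of $\#_\alpha$, and inversion --- is exactly the $k=1$ computation transplanted to the exterior power. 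What your route buys is that you never need the complete-positivity reduction at all: you only ever take $\wedge^k$ of outputs of $\Phi$ and $\Psi$, not of the maps themselves. What the paper's route buys is that the $k\ge2$ case is literally the $k=1$ case applied to different data, with no need to verify that $\wedge^k$ respects the order or the geometric mean. Your handling of the equivalence of \eqref{F-3.2} and \eqref{F-3.3} and of the determinant upgrade in \eqref{F-3.4} matches the paper (and is in fact slightly more explicit about why $\prec_{w(\log)}$ improves to $\prec_{(\log)}$ there).
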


\begin{proof}
Let $C^*(I,A)$ be the commutative $C^*$-subalgebra of $\bM_n$ generated by $I,A$. We may
consider, instead of $\Phi$, the composition of the trace-preserving conditional expectation
from $\bM_n$ onto $C^*(I,A)$ and $\Phi|_{C^*(I,A)}:C^*(I,A)\to\bM_d$, which is completely
positive. Hence one can assume that $\Phi$ is completely positive and similarly for $\Psi$. The
weak log-majorization \eqref{F-3.2} means that
\begin{align}
&\prod_{i=1}^k\lambda_i\bigl(\Phi(A^{p_\alpha})^{1/2}\Psi(B^{p_\alpha})
\Phi(A^{p_\alpha})^{1/2}\bigr) \nonumber\\
&\quad\le\prod_{i=1}^k
\lambda_i^{1-\alpha}\bigl(\Phi(A^{p_0})^{1/2}\Psi(B^{p_0})\Phi(A^{p_0})^{1/2}\bigr)
\lambda_i^\alpha\bigl(\Phi(A^{p_1})^{1/2}\Psi(B^{p_1})\Phi(A^{p_1})^{1/2}\bigr) \label{F-3.5}
\end{align}
for every $k=1,\dots,l$. The case $k=1$ is Proposition \ref{P-2.2} in the case where
$\ffi_0(x):=x^{p_0}$ and $\ffi_1(x):=x^{p_1}$ so that $\ffi_\alpha(x)=x^{p_\alpha}$.

Next, for each $k$ with $2\le k\le l$ we consider the $k$-fold tensor product
$$
\Phi^{\otimes k}:\bM_n^{\otimes k}=B((\bC^n)^{\otimes k})\to
\bM_l^{\otimes k}=B((\bC^l)^{\otimes k}),
$$
and similarly for $\Psi^{\otimes k}$. Let $P_\wedge$ be the orthogonal projection from
$(\bC^l)^{\otimes k}$ onto the $k$-fold antisymmetric tensor Hilbert space $(\bC^l)^{\wedge k}$.
Since $\Phi$ and $\Psi$ are assumed completely positive, one can define positive linear maps
\begin{align*}
\Phi^{(k)}&:=P_\wedge\Phi^{\otimes k}(\cdot)P_\wedge:
\bM_n^{\otimes k}\to B((\bC^l)^{\wedge k}), \\
\Psi^{(k)}&:=P_\wedge\Psi^{\otimes k}(\cdot)P_\wedge:
\bM_m^{\otimes k}\to B((\bC^l)^{\wedge k}).
\end{align*}
For every $X\in\bM_n$ we note that
$\Phi^{(k)}(X^{\otimes k})=P_\wedge\Phi(X)^{\otimes k}P_\wedge$ is nothing but the $k$-fold
antisymmetric tensor power $\Phi(X)^{\wedge k}$ of $\Phi(X)$. By applying the case $k=1$ shown
above to $A^{\otimes k}$ and $B^{\otimes k}$ we have
\begin{align*}
&\lambda_1\bigl(\Phi^{(k)}((A^{\otimes k})^{p_\alpha})^{1/2}
\Psi^{(k)}((B^{\otimes k})^{p_\alpha})\Phi^{(k)}((A^{\otimes k})^{p_\alpha})^{1/2}\bigr) \\
&\quad\le\lambda_1^{1-\alpha}\bigl(\Phi^{(k)}((A^{\otimes k})^{p_0})^{1/2}
\Psi^{(k)}((B^{\otimes k})^{p_0})\Phi^{(k)}((A^{\otimes k})^{p_0})^{1/2}\bigr) \\
&\qquad\qquad\lambda_1^\alpha\bigl(\Phi^{(k)}((A^{\otimes k})^{p_1})^{1/2}
\Psi^{(k)}((B^{\otimes k})^{p_1})\Phi^{(k)}((A^{\otimes k})^{p_1})^{1/2}\bigr).
\end{align*}
Since $\Phi^{(k)}((A^{\otimes k})^{p_\alpha})=\Phi(A^{p_\alpha})^{\wedge k}$ and
$\Psi^{(k)}((B^{\otimes k})^{p_\alpha})=\Psi(B^{p_\alpha})^{\wedge k}$, the above left-hand
side is
$$
\lambda_1\Bigl(\bigl(\Phi(A^{p_\alpha})^{1/2}\Psi(B^{p_\alpha})
\Phi(A^{p_\alpha})^{1/2}\bigr)^{\wedge k}\Bigr)
=\prod_{i=1}^k\lambda_i\bigl(\Phi(A^{p_\alpha})^{1/2}\Psi(B^{p_\alpha})
\Phi(A^{p_\alpha})^{1/2}\bigr)
$$
and the right-hand side is
\begin{align*}
&\lambda_1^{1-\alpha}\Bigl(\bigl(\Phi(A^{p_0})^{1/2}\Psi(B^{p_0)})
\Phi(A^{p_0})^{1/2}\bigr)^{\wedge k}\Bigr)
\lambda_1^\alpha\Bigl(\bigl(\Phi(A^{p_1})^{1/2}\Psi(B^{p_1)})
\Phi(A^{p_1})^{1/2}\bigr)^{\wedge k}\Bigr) \\
&\quad=\prod_{i=1}^k\lambda_i^{1-\alpha}\bigl(\Phi(A^{p_0})^{1/2}\Psi(B^{p_0})
\Phi(A^{p_0})^{1/2}\bigr)
\lambda_i^\alpha\bigl(\Phi(A^{p_1})^{1/2}\Psi(B^{p_1})\Phi(A^{p_1})^{1/2}\bigr).
\end{align*}
Hence we have \eqref{F-3.5} for every $k=1,\dots,l$, so \eqref{F-3.2} follows.

Since $\lambda(\Phi(A^{p_\alpha})^{1/2}\Psi(B^{p_\alpha})\Phi(A^{p_\alpha})^{1/2})
=s^2(\Phi(A^{p_\alpha})^{1/2}\Psi(B^{p_\alpha})^{1/2})$, it is clear that \eqref{F-3.2} and
\eqref{F-3.3} are equivalent. When $\Phi=\Psi=\id$ and $A,B$ are replaced with $A^2,B^2$,
\eqref{F-3.3} reduces to \eqref{F-3.4}.
\end{proof}

\begin{remark}\label{R-3.2}\rm
It is not known whether a modification of \eqref{F-3.3}
$$
s(\Phi(A^{p_\alpha})\Psi(B^{p_\alpha}))
\prec_{w(\log)}s^{1-\alpha}(\Phi(A^{p_0})\Psi(B^{p_0}))s^\alpha(\Phi(A^{p_1})\Psi(B^{p_1}))
$$
holds true or not.
\end{remark}

By reducing \eqref{F-3.2} to the case $(p_0,p_1)=(0,1)$ we have

\begin{cor}\label{C-3.3}
Let $0\le\alpha\le1$. Then for every $A\in\bM_n^+$ and $B\in\bM_m^+$,
\begin{align}
&\lambda\bigl(\Phi(A^\alpha)^{1/2}\Psi(B^\alpha)\Phi(A^\alpha)^{1/2}\bigr) \nonumber\\
&\quad\prec_{w(\log)}
\lambda^{1-\alpha}\bigl(\Phi(I_n)^{1/2}\Psi(I_m)\Phi(I_n)^{1/2}\bigr)
\lambda^\alpha\bigl(\Phi(A)^{1/2}\Psi(B)\Phi(A)^{1/2}\bigr). \label{F-3.6}
\end{align}
Consequently, if $\Phi(I_n)^{1/2}\Psi(I_m)\Phi(I_n)^{1/2}\le I_l$, then
\begin{equation}\label{F-3.7}
\lambda\bigl(\Phi(A^\alpha)^{1/2}\Psi(B^\alpha)\Phi(A^\alpha)^{1/2}\bigr)
\prec_{w(\log)}\lambda^\alpha\bigl(\Phi(A)^{1/2}\Psi(B)\Phi(A)^{1/2}\bigr).
\end{equation}
\end{cor}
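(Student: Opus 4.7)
The plan is to derive this corollary directly from Theorem \ref{T-3.1} by specializing the two endpoints of the $p$-interval. First I would substitute $(p_0,p_1)=(0,1)$ into \eqref{F-3.2}, so that $p_\alpha=\alpha$. Under the convention $x^0\equiv 1$ on $[0,\infty)$ (the same convention $0^0:=1$ that is already used in Proposition \ref{P-2.2}), functional calculus gives $A^{p_0}=I_n$ and $B^{p_0}=I_m$, while of course $A^{p_1}=A$, $B^{p_1}=B$ and $A^{p_\alpha}=A^\alpha$, $B^{p_\alpha}=B^\alpha$. Plugging these into \eqref{F-3.2} yields \eqref{F-3.6} on the nose; this is the content of the first assertion.

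For the second assertion, I would deduce \eqref{F-3.7} from \eqref{F-3.6} by a soft observation on weak log-majorization. The hypothesis $\Phi(I_n)^{1/2}\Psi(I_m)\Phi(I_n)^{1/2}\le I_l$ forces
\[
\lambda_i\bigl(\Phi(I_n)^{1/2}\Psi(I_m)\Phi(I_n)^{1/2}\bigr)\le 1
\]
for every $i$, hence also $\lambda_i^{1-\alpha}\bigl(\Phi(I_n)^{1/2}\Psi(I_m)\Phi(I_n)^{1/2}\bigr)\le 1$ for each $\alpha\in[0,1]$. Therefore the partial products
\[
\prod_{i=1}^k\lambda_i^{1-\alpha}\bigl(\Phi(I_n)^{1/2}\Psi(I_m)\Phi(I_n)^{1/2}\bigr)
\]
are all bounded by $1$, so \eqref{F-3.6} gives
\[
\prod_{i=1}^k\lambda_i\bigl(\Phi(A^\alpha)^{1/2}\Psi(B^\alpha)\Phi(A^\alpha)^{1/2}\bigr)
\le\prod_{i=1}^k\lambda_i^\alpha\bigl(\Phi(A)^{1/2}\Psi(B)\Phi(A)^{1/2}\bigr)
\]
for $k=1,\dots,l$, which is exactly \eqref{F-3.7}.

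Honestly there is no real obstacle to overcome: once Theorem \ref{T-3.1} is available, the corollary is a one-line specialization followed by a monotonicity remark. The only subtle point worth mentioning is that for singular $A\in\bM_n^+$ one must interpret $A^0$ via the convention $0^0:=1$ already in force in the paper, so that $A^0=I_n$ makes sense and the substitution $p_0=0$ is legitimate.
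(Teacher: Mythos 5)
Your proposal is correct and matches the paper's own (one-line) argument: the paper obtains Corollary \ref{C-3.3} precisely by specializing \eqref{F-3.2} to $(p_0,p_1)=(0,1)$, and \eqref{F-3.7} then follows since all eigenvalues of $\Phi(I_n)^{1/2}\Psi(I_m)\Phi(I_n)^{1/2}\le I_l$ are at most $1$. Your added remark about interpreting $A^0=I_n$ via the convention $0^0:=1$ is a reasonable bit of extra care that the paper leaves implicit.
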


The last log-majorization with $\Phi=\Psi=\id$ and also \eqref{F-3.4} with $(p_0,p_1)=(0,1)$
give Araki's log-majorization \eqref{F-1.3} or $s(A^\alpha B^\alpha)\prec_{(\log)}s^\alpha(AB)$
for $0\le\alpha\le1$. By letting $\alpha=1/r$ with $r\ge1$ and replacing $A,B$ with $A^r,B^r$
one can rephrase \eqref{F-3.6} as
\begin{align}
&\lambda^r\bigl(\Phi(A)^{1/2}\Psi(B)\Phi(A)^{1/2}\bigr) \nonumber\\
&\quad\prec_{w(\log)}
\lambda^{r-1}\bigl(\Phi(I_n)^{1/2}\Psi(I_m)\Phi(I_n)^{1/2}\bigr)
\lambda\bigl(\Phi(A^r)^{1/2}\Psi(B^r)\Phi(A^r)^{1/2}\bigr) \label{F-3.8}
\end{align}
for all $r\ge1$. Also, when $\Phi(I_n)^{1/2}\Psi(I_m)\Phi(I_n)^{1/2}\le I_l$, \eqref{F-3.7} is
rewritten as
\begin{equation}\label{F-3.9}
\lambda^r\bigl(\Phi(A)^{1/2}\Psi(B)\Phi(A)^{1/2}\bigr)\prec_{w(\log)}
\lambda\bigl(\Phi(A^r)^{1/2}\Psi(B^r)\Phi(A^r)^{1/2}\bigr),\qquad r\ge1.
\end{equation}

A norm $\|\cdot\|$ on $\bM_n$ is called a {\it unitarily invariant norm} (or a {\it symmetric
norm}) if $\|UXV\|=\|X\|$ for all $X,U,V\in\bM_n$ with $U,V$ unitaries.

\begin{cor}\label{C-3.4}
Let $p_0$, $p_1$, and $p_\alpha$ for $0\le\alpha\le1$ be as in Theorem \ref{T-3.1}. Let
$\|\cdot\|$ be any unitarily invariant norm and $r>0$. Then for every $A\in\bM_n^+$ and
$B\in\bM_m^+$,
\begin{align}
&\big\|\,\big|\Phi(A^{p_\alpha})^{1/2}\Psi(B^{p_\alpha})^{1/2}\big|^r\big\| \nonumber\\
&\qquad\le\big\|\,\big|\Phi(A^{p_0})^{1/2}\Psi(B^{p_0})^{1/2}\,\big|^r\big\|^{1-\alpha}
\big\|\,\big|\Phi(A^{p_1})^{1/2}\Psi(B^{p_1})^{1/2}\big|^r\big\|^\alpha. \label{F-3.10}
\end{align}
In particular, for every $A,B\in\bM_n^+$,
$$
\|\,|A^{p_\alpha}B^{p_\alpha}|^r\|
\le\|\,|A^{p_0}B^{p_0}|^r\|^{1-\alpha}\|\,|A^{p_1}B^{p_1}|^r\|^\alpha.
$$
\end{cor}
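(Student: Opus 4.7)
The plan is to derive \eqref{F-3.10} from the weak log-majorization \eqref{F-3.3} of Theorem~\ref{T-3.1} in three standard steps: raise the log-majorization to the $r$-th power, pass from weak log-majorization to a unitarily invariant norm inequality, and finally apply a H\"older-type inequality for unitarily invariant norms.

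First, I would raise \eqref{F-3.3} to the $r$-th power. Weak log-majorization between non-negative sequences is trivially preserved under coordinate-wise taking of any positive power, and $s_i(|X|^r)=s_i(X)^r$, so \eqref{F-3.3} converts into
\begin{align*}
&s\bigl(\big|\Phi(A^{p_\alpha})^{1/2}\Psi(B^{p_\alpha})^{1/2}\big|^r\bigr) \\
&\quad\prec_{w(\log)}
s^{1-\alpha}\bigl(\big|\Phi(A^{p_0})^{1/2}\Psi(B^{p_0})^{1/2}\big|^r\bigr)\,
s^\alpha\bigl(\big|\Phi(A^{p_1})^{1/2}\Psi(B^{p_1})^{1/2}\big|^r\bigr),
\end{align*}
the right-hand side being coordinate-wise, hence automatically non-negative and decreasing.

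Next, I would invoke the classical fact that weak log-majorization between decreasing non-negative sequences implies weak majorization, together with Ky Fan's theorem that any unitarily invariant norm on $\bM_l$ equals $g\circ s$ for a symmetric gauge function $g$ which is monotone under weak majorization. Setting $u_i:=s_i(|\Phi(A^{p_0})^{1/2}\Psi(B^{p_0})^{1/2}|^r)$ and $v_i:=s_i(|\Phi(A^{p_1})^{1/2}\Psi(B^{p_1})^{1/2}|^r)$, this gives
$$\big\|\,\big|\Phi(A^{p_\alpha})^{1/2}\Psi(B^{p_\alpha})^{1/2}\big|^r\big\|\le g(u^{1-\alpha}v^\alpha).$$
The final step is the H\"older inequality for symmetric gauge functions, $g(u^{1-\alpha}v^\alpha)\le g(u)^{1-\alpha}g(v)^\alpha$, which immediately yields \eqref{F-3.10}. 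The ``in particular'' statement follows in exactly the same manner from the log-majorization \eqref{F-3.4}.

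The only non-trivial ingredient, and hence the main obstacle, is the H\"older-type inequality for a general unitarily invariant norm used in the last step. For Ky Fan and Schatten norms it follows directly from the scalar H\"older inequality, but its extension to arbitrary symmetric gauge functions requires a short duality argument: writing $g(x)=\sup\bigl\{\sum x_iy_i:g^*(y)\le 1,\ y\ge 0\bigr\}$ with $g^*$ the dual gauge function, one estimates $\sum u_i^{1-\alpha}v_i^\alpha y_i=\sum(u_iy_i)^{1-\alpha}(v_iy_i)^\alpha\le\bigl(\sum u_iy_i\bigr)^{1-\alpha}\bigl(\sum v_iy_i\bigr)^\alpha\le g(u)^{1-\alpha}g(v)^\alpha g^*(y)$ by the scalar H\"older inequality applied twice, and then takes the supremum over admissible $y$.
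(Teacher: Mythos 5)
Your proposal is correct and follows essentially the same route as the paper: raise the weak log-majorization \eqref{F-3.3} to the $r$-th power, use that weak log-majorization implies weak majorization and hence monotonicity of the symmetric gauge function, and finish with the H\"older inequality for symmetric gauge functions (which the paper simply cites from Bhatia's book \cite[IV.1.6]{Bh1}, while you supply the short duality argument). No gaps.
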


\begin{proof}
We may assume that $0<\alpha<1$. Let $\psi$ be the symmetric gauge function on $\bR^l$
corresponding to the unitarily invariant norm $\|\cdot\|$, so $\|X\|=\psi(s(X))$ for $X\in\bM_l$.
Recall \cite[IV.1.6]{Bh1} that $\psi$ satisfies the H\"older inequality
$$
\psi(a_1b_1,\dots,a_lb_l)\le
\psi^{1-\alpha}\Bigl(a_1^{1\over1-\alpha},\dots,a_l^{1\over1-\alpha}\Bigr)
\psi^\alpha\Bigl(b_1^{1\over\alpha},\dots,b_l^{1\over\alpha}\Bigr)
$$
for every $a,b\in[0,\infty)^l$. Also, it is well-known (see, e.g.,
\cite[Proposition 4.1.6 and Lemma 4.4.2]{Hi2}) that $a\prec_{w(\log)}b$ implies the weak
majorization $a\prec_wb$ and so $\psi(a)\le\psi(b)$. Hence it follows from the weak
log-majorization in \eqref{F-3.3} that
\begin{align*}
&\big\|\,\big|\Phi(A^{p_\alpha})^{1/2}\Psi(B^{p_\alpha})^{1/2}\big|^r\big\| \\
&\quad=\psi\Bigl(s^r\bigl(\Phi(A^{p_\alpha})^{1/2}\Psi(B^{p_\alpha})^{1/2}\bigr)\Bigr) \\
&\quad\le\psi\Bigl(s^{(1-\alpha)r}\bigl(\Phi(A^{p_0})^{1/2}\Psi(B^{p_0})^{1/2}\bigr)
s^{\alpha r}\bigl(\Phi(A^{p_1})^{1/2}\Psi(B^{p_1})^{1/2}\bigr)\Bigr) \\
&\quad\le\psi^{1-\alpha}\Bigl(s^r\bigl(\Phi(A^{p_0})^{1/2}\Psi(B^{p_0})^{1/2}\bigr)\Bigr)
\psi^\alpha\Bigl(s^r\bigl(\Phi(A^{p_1})^{1/2}\Psi(B^{p_1})^{1/2}\bigr)\Bigr) \\
&\quad\le\big\|\,\big|\Phi(A^{p_0})^{1/2}\Psi(B^{p_0})^{1/2}\,\big|^r\big\|^{1-\alpha}
\big\|\,\big|\Phi(A^{p_1})^{1/2}\Psi(B^{p_1})^{1/2}\big|^r\big\|^\alpha.
\end{align*}
\end{proof}

The norm inequality in \eqref{F-3.10} is a kind of the H\"older type inequality, showing the
log-convexity of the function
$$
p\in[0,\infty)\longmapsto\big\|\,\big|\Phi(A^p)^{1/2}\Psi(B^p)^{1/2}\big|^r\big\|.
$$

\begin{cor}\label{C-3.5}
Let $\|\cdot\|$ be a unitarily invariant norm. If
$\Phi(I_n)^{1/2}\Psi(I_m)\Phi(I_n)^{1/2}\le I_l$, then for every $A\in\bM_n^+$ and $B\in\bM_m^+$,
$$
\big\|\bigl\{\Phi(A^p)^{1/2}\Psi(B^p)\Phi(A^p)^{1/2}\bigr\}^{1/p}\big\|\le
\big\|\bigl\{\Phi(A^q)^{1/2}\Psi(B^q)\Phi(A^q)^{1/2}\bigr\}^{1/q}\big\|
\quad\mbox{if $0<p\le q$}.
$$
Furthermore, if $\Phi$ and $\Psi$ are unital and $A,B>0$, then 
$$
\big\|\bigl\{\Phi(A^p)^{1/2}\Psi(B^p)\Phi(A^p)^{1/2}\bigr\}^{1/p}\big\|
$$
decreases to $\|\exp\{\Phi(\log A)+\Psi(\log B)\}\|$ as $p\searrow0$.
\end{cor}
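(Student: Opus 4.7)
The monotonicity assertion will be a direct consequence of the weak log-majorization \eqref{F-3.9}. The plan is to apply \eqref{F-3.9} with exponent $r:=q/p\ge1$ and with $A,B$ replaced by $A^p,B^p$, which yields
$$
\lambda^{q/p}\bigl(\Phi(A^p)^{1/2}\Psi(B^p)\Phi(A^p)^{1/2}\bigr)\prec_{w(\log)}
\lambda\bigl(\Phi(A^q)^{1/2}\Psi(B^q)\Phi(A^q)^{1/2}\bigr).
$$
Writing $X_r:=\Phi(A^r)^{1/2}\Psi(B^r)\Phi(A^r)^{1/2}$ and raising both sides componentwise to the $1/q$-th power (which preserves $\prec_{w(\log)}$ trivially from its definition), we obtain $\lambda(X_p^{1/p})\prec_{w(\log)}\lambda(X_q^{1/q})$. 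Since weak log-majorization implies weak majorization, and weak majorization is preserved by any symmetric gauge function, the norm inequality $\|X_p^{1/p}\|\le\|X_q^{1/q}\|$ follows.

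For the Lie--Trotter-type limit, assume $\Phi,\Psi$ are unital and $A,B>0$. Then the Taylor expansion of the matrix exponential gives $A^p=I_n+p\log A+O(p^2)$ in operator norm as $p\searrow0$, and by linearity and continuity of $\Phi$ together with unitality,
$$
\Phi(A^p)=I_l+p\,\Phi(\log A)+O(p^2),\qquad
\Psi(B^p)=I_l+p\,\Psi(\log B)+O(p^2).
$$
Taking square roots (the map $T\mapsto T^{1/2}$ is smooth near $I_l$) and multiplying out yields
$$
X_p=I_l+p\bigl[\Phi(\log A)+\Psi(\log B)\bigr]+O(p^2).
$$
Setting $Y:=\Phi(\log A)+\Psi(\log B)$, for $p$ sufficiently small $X_p$ lies in the domain of the principal logarithm, so $\log X_p=pY+O(p^2)$, and therefore $(1/p)\log X_p\to Y$ in operator norm as $p\searrow0$. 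Continuity of the exponential on bounded sets then gives $X_p^{1/p}=\exp((1/p)\log X_p)\to e^Y$ in operator norm, so that all singular values of $X_p^{1/p}$ converge to those of $e^Y$, and hence $\|X_p^{1/p}\|\to\|e^Y\|$. Combined with the monotonicity established above, this convergence is a decrease.

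The main technical point is controlling the $O(p^2)$ remainders uniformly in a neighborhood of $p=0$ so that the passage from $X_p=I_l+pY+O(p^2)$ to $\log X_p=pY+O(p^2)$ is legitimate; this is routine since all matrices involved are finite-dimensional and $\Phi,\Psi$ are bounded, so each Taylor expansion comes with a genuine norm estimate. A minor preliminary remark is that unitality of $\Phi$ and $\Psi$ forces $\Phi(I_n)^{1/2}\Psi(I_m)\Phi(I_n)^{1/2}=I_l$, which makes the hypothesis of the first part available and also explains why $X_p\to I_l$ as $p\searrow0$, the feature that makes the logarithm well-defined.
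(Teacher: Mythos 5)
Your proposal is correct and follows essentially the same route as the paper: the monotonicity is obtained from the weak log-majorization \eqref{F-3.9} (equivalently \eqref{F-3.7} with $\alpha=p/q$ applied to $A^q,B^q$) together with the fact that weak log-majorization implies weak majorization and hence the symmetric-gauge-function inequality, and the limit is the standard Lie--Trotter argument via the expansions $\Phi(A^p)^{1/2}=I_l+\tfrac{p}{2}\Phi(\log A)+o(p)$ and $\Psi(B^p)=I_l+p\Psi(\log B)+o(p)$. Your write-up merely fills in the routine details the paper leaves as ``standard.''
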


\begin{proof}
Let $0<p\le q$. By applying \eqref{F-3.7} to $A^q$, $B^q$ and $\alpha=p/q$ we have
$$
\lambda^{1/p}\bigl(\Phi(A^p)^{1/2}\Psi(B^p)\Phi(A^p)^{1/2}\bigr)
\prec_{w(\log)}\lambda^{1/q}\bigl(\Phi(A^q)^{1/2}\Psi(B^q)\Phi(A^q)^{1/2}\bigr),
$$
which implies the desired norm inequality. Under the additional assumptions on $\Phi,\Psi$ and
$A,B$ as stated in the corollary, the proof of the limit formula is standard with use of
$$
\Phi(A^p)^{1/2}=I_l+{p\over2}\,\Phi(\log A)+o(p),\quad
\Psi(B^p)=I_l+p\Psi(\log B)+o(p).\
$$
as $p\to0$.
\end{proof}

\begin{remark}\label{R-3.6}\rm
When $\Phi=\id$ and $\Psi=Z^*\cdot Z$ with a contraction $Z\in\bM_n$, it follows from
\eqref{F-3.9} that, for every $A,B\in\bM_n^+$,
\begin{equation}\label{F-3.11}
\lambda^r\bigl(A^{1/2}Z^*BZA^{1/2}\bigr)\prec_{w(\log)}
\lambda\bigl(A^{r/2}Z^*B^rZA^{r/2}\bigr),\qquad r\ge1,
\end{equation}
which is \cite[Corollary 2.3]{BL}. Although the form of \eqref{F-3.9} is seemingly more general
than that of \eqref{F-3.11}, it is in fact easy to see that \eqref{F-3.9} follows from
\eqref{F-3.11} conversely. Indeed, we may assume as in the proof of Theorem \ref{T-3.1} that
$\Phi$ and $\Psi$ are completely positive. Then, via the Stinespring representation (see, e.g.,
\cite[Theorem 3.1.2]{Bh2}), we may further assume that $\Phi=V^*\cdot V$ with an operator
$V:\bC^l\to\bC^n$ and $\Psi=W^*\cdot W$ with an operator $W:\bC^l\to\bC^m$. The assumption
$\Phi(I)^{1/2}\Psi(I)\Phi(I)^{1/2}\le I$ is equivalent to $\|WV^*\|_\infty\le1$. One can see
that
$$
\Phi(A^r)^{1/2}\Psi(B^r)\Phi(A^r)^{1/2}=(V^*A^rV)^{1/2}(W^*B^rW)(V^*A^rV)^{1/2}
$$
is unitarily equivalent to $A^{r/2}VW^*B^rWV^*A^{r/2}$, and thus \eqref{F-3.11} implies
\eqref{F-3.9}. Here, it should be noted that the proof of \eqref{F-3.11} in \cite{BL} is valid
even though $Z=WV^*$ is an $m\times n$ (not necessarily square) matrix. In this way, the
log-majorization in \eqref{F-3.9} is equivalent to \cite[Corollary 2.3]{BL}. Similarly,
Corollary \ref{C-3.5} is equivalent to \cite[Corollary 2.2]{BL}. The author is indebted to
J.-C.~Bourin for the remark here.
\end{remark}

\section{More inequalities for operator means}

The log-majorization obtained in \cite{AH} for the weighted geometric means says that, for
every $0\le\alpha\le1$ and every $A,B\in\bM_n^+$,
\begin{equation}\label{F-4.1}
\lambda(A^r\,\#_\alpha\,B^r)\prec_{(\log)}\lambda^r(A\,\#_\alpha\,B),
\qquad r\ge1,
\end{equation}
or equivalently,
$$
\lambda^q(A\,\#_\alpha\,B)\prec_{(\log)}\lambda(A^q\,\#_\alpha\,B^q),
\qquad0\le q\le1.
$$
The essential first step to prove this is the operator norm inequality
$$
\|A^r\,\#_\alpha\,B^r\|_\infty\le\|A\,\#_\alpha\,B\|_\infty^r,\qquad r\ge1,
$$
which is equivalent to that $A\,\#_\alpha\,B\le I$ $\Rightarrow$ $A^r\,\#_\alpha\,B^r\le I$ for
all $r\ge1$. By taking the inverse when $A,B>0$, this is also equivalent to that
$A\,\#_\alpha\,B\ge I$ $\Rightarrow$ $A^r\,\#_\alpha\,B^r\ge I$ for all $r\ge1$. The last
implication was recently extended in \cite[Lemmas 2.1, 2.2]{Wa} to the assertion stating the
equivalence between the following two conditions for $f\in\OM_{+,1}$:
\begin{itemize}
\item[(i)] $f(x)^r\le f(x^r)$ for all $x\ge0$ and $r\ge1$;
\item[(ii)] for every $A,B\in\bM_n^+$, $A\,\sigma_f\,B\ge I$ $\Rightarrow$
$A^r\,\sigma_f\,B^r\ge I$ for all $r\ge1$.
\end{itemize}
We note that the above conditions are also equivalent to
\begin{itemize}
\item[(iii)] for every $A,B\in\bM_n^+$,
$$
\lambda_n(A^r\,\sigma_f\,B^r)\ge\lambda_n^r(A\,\sigma_f\,B),\qquad r\ge1;
$$
or equivalently, for every $A,B\in\bM_n^+$,
$$
\lambda_n(A^q\,\sigma_f\,B^q)\le\lambda_n^q(A\,\sigma_f\,B),\qquad0<q\le1.
$$
\end{itemize}

The next proposition extends the above result to the form involving positive linear maps.
Below let $\Phi$ and $\Psi$ be positive linear maps as before.

\begin{prop}\label{P-4.1}
Assume that $f\in\OM_{+,1}$ satisfies the above condition {\rm(i)}. Then for every $A\in\bM_n^+$
and $B\in\bM_m^+$,
\begin{equation}\label{F-4.2}
\bigl(\max\{\|\Phi(I_n)\|_\infty,\|\Psi(I_m)\|_\infty\}\bigr)^{r-1}
\lambda_l\bigl(\Phi(A^r)\,\sigma_f\,\Psi(B^r)\bigr)
\ge\lambda_l^r\bigl(\Phi(A)\,\sigma_f\,\Psi(B)\bigr)
\end{equation}
for all $r\ge1$.
\end{prop}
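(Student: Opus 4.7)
The plan is to reduce to sub-unital completely positive $\Phi,\Psi$ and then combine the Jensen-type inequality with the equivalence (i)$\Leftrightarrow$(ii). First I would set $M:=\max\{\|\Phi(I_n)\|_\infty,\|\Psi(I_m)\|_\infty\}$ and replace $\Phi,\Psi$ by $\Phi/M$, $\Psi/M$, both of which are sub-unital. Since $\sigma_f$ is positively homogeneous, the factor $M^{r-1}$ in \eqref{F-4.2} materializes exactly when this scaling is undone, so it suffices to prove
$$
\lambda_l\bigl(\Phi(A^r)\,\sigma_f\,\Psi(B^r)\bigr)\ge\lambda_l^r\bigl(\Phi(A)\,\sigma_f\,\Psi(B)\bigr)
$$
in the sub-unital case. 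As in the proof of Theorem \ref{T-3.1}, composing $\Phi$ (resp.\ $\Psi$) with the trace-preserving conditional expectation onto the commutative $C^*$-subalgebra $C^*(I,A)$ (resp.\ $C^*(I,B)$) further allows the assumption that $\Phi,\Psi$ are completely positive.

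Setting $\lambda:=\lambda_l(\Phi(A)\,\sigma_f\,\Psi(B))$ and rescaling $A,B$ by $1/\lambda$ (one may assume $\lambda>0$; the case $\lambda=0$ is trivial), the task reduces to the implication
$$
\Phi(A)\,\sigma_f\,\Psi(B)\ge I_l\ \Longrightarrow\ \Phi(A^r)\,\sigma_f\,\Psi(B^r)\ge I_l.
$$
For this, I would apply the Jensen-type inequality for sub-unital CP maps (as used in Remark \ref{R-2.4}) to the operator monotone function $x^{1/r}$ on $[0,\infty)$: $\Phi(A^{1/r})\le\Phi(A)^{1/r}$, and upon substituting $A\mapsto A^r$,
$$
\Phi(A)\le\Phi(A^r)^{1/r},\qquad\Psi(B)\le\Psi(B^r)^{1/r}.
$$
Putting $\widetilde A:=\Phi(A^r)^{1/r}$ and $\widetilde B:=\Psi(B^r)^{1/r}$, the joint monotonicity of $\sigma_f$ yields
$$
\widetilde A\,\sigma_f\,\widetilde B\ge\Phi(A)\,\sigma_f\,\Psi(B)\ge I_l.
$$
Invoking condition (ii)---equivalent to the hypothesis (i)---for the pair $(\widetilde A,\widetilde B)$ with exponent $r\ge1$ now produces
$$
\Phi(A^r)\,\sigma_f\,\Psi(B^r)=\widetilde A^{\,r}\,\sigma_f\,\widetilde B^{\,r}\ge I_l,
$$
which is the desired conclusion.

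The main technical obstacle I anticipate is handling non-invertibility: when $\Phi(A)$, $\Psi(B)$, $\widetilde A$, or $\widetilde B$ fails to be positive definite, the scaling by $1/\lambda$ and the manipulations with $x^{1/r}$ in the Jensen inequality need to be justified by the standard $A+\eps I_n$, $B+\eps I_m$ perturbation followed by a continuity limit $\eps\searrow0$. Otherwise the argument is a clean three-step combination: sub-unital/CP reduction, Jensen to upgrade $\Phi(A),\Psi(B)$ into $\Phi(A^r)^{1/r},\Psi(B^r)^{1/r}$, and the scalar-function hypothesis (i) (in the guise of (ii)) to raise the power back up to $r$.
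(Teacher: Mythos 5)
Your proof is correct and follows essentially the same route as the paper's: the scalar normalization by $M$ combined with the sub-unital Jensen inequality $\Phi(A)\le M^{1-1/r}\Phi(A^r)^{1/r}$ is just a repackaging of the paper's key bound $\Phi(A^q)\le\|\Phi(I_n)\|_\infty^{1-q}\Phi(A)^q$ (obtained there via the unital map $\Phi(I_n)^{-1/2}\Phi(\cdot)\Phi(I_n)^{-1/2}$ and the weighted geometric mean), and invoking Wada's condition (ii) after rescaling by $\lambda_l$ is equivalent to the paper's direct use of condition (iii). The ingredients (Jensen for the normalized positive map, joint monotonicity of $\sigma_f$, and the equivalence (i)$\Leftrightarrow$(ii)$\Leftrightarrow$(iii)) coincide, so there is no gap.
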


\begin{proof}
By continuity we may assume that $\Phi$ and $\Psi$ are strictly positive. Let $0<q\le1$.  Since
$\Phi(I_n)^{-1/2}\Phi(\cdot)\Phi(I_n)^{-1/2}$ is a unital positive linear map, it is well-known
\cite[Proposition 2.7.1]{Bh2} that
$$
\Phi(I_n)^{-1/2}\Phi(A^q)\Phi(I_n)^{-1/2}\le
\bigl(\Phi(I_n)^{-1/2}\Phi(A)\Phi(I_n)^{-1/2}\bigr)^q
$$
so that
\begin{align*}
\Phi(A^q)&\le\Phi(I_n)^{1/2}\bigl(\Phi(I_n)^{-1/2}\Phi(A)\Phi(I_n)^{-1/2}\bigr)^q
\Phi(I_n)^{1/2} \\
&=\Phi(I_n)\,\#_q\,\Phi(A)\le(\|\Phi(I_n)\|_\infty I_n)\,\#_q\,\Phi(A) \\
&=\|\Phi(I_n)\|_\infty^{1-q}\Phi(A)^q
\end{align*}
and similarly
$$
\Psi(B^q)\le\|\Psi(I_m)\|_\infty^{1-q}\Psi(B)^q.
$$
By the joint monotonicity of $\sigma_f$ we have
\begin{align}
\Phi(A^q)\,\sigma_f\,\Psi(B^q)
&\le\bigl(\|\Phi(I_n)\|_\infty^{1-q}\Phi(A)^q\bigr)\,\sigma_f\,
\bigl(\|\Psi(I_m)\|_\infty^{1-q}\Psi(B)^q\bigr) \nonumber\\
&\le\bigl(\max\{\|\Phi(I_n)\|_\infty,\|\Psi(I_m)\|_\infty\}\bigr)^{1-q}
\bigl(\Phi(A)^q\,\sigma_f\,\Psi(B)^q\bigr). \label{F-4.3}
\end{align}
Therefore,
\begin{align*}
\lambda_l\bigl(\Phi(A^q)\,\sigma_f\,\Psi(B^q)\bigr)
&\le\bigl(\max\{\|\Phi(I_n)\|_\infty,\|\Psi(I_m)\|_\infty\}\bigr)^{1-q}
\lambda_l\bigl(\Phi(A)^q\,\sigma_f\,\Psi(B)^q\bigr) \\
&\le\bigl(\max\{\|\Phi(I_n)\|_\infty,\|\Psi(I_m)\|_\infty\}\bigr)^{1-q}
\lambda_l^q\bigl(\Phi(A)\,\sigma_f\,\Psi(B)\bigr)
\end{align*}
by using the property (iii) above. Now, for $0<r\le1$ let $q:=1/r$. By replacing $A,B$ with
$A^r,B^r$, respectively, we obtain
$$
\lambda_l\bigl(\Phi(A)\,\sigma_f\,\Psi(B)\bigr)
\le\bigl(\max\{\|\Phi(I_n)\|_\infty,\|\Psi(I_m)\|_\infty\}\bigr)^{1-{1\over r}}
\lambda_l^{1/r}\bigl(\Phi(A^r)\,\sigma_f\,\Psi(B^r)\bigr),
$$
which yields \eqref{F-4.2}.
\end{proof}

When $\sigma_f$ is the weighted geometric mean $\#_\alpha$, one can improve Proposition
\ref{P-4.1} to the log-supermajorization result as follows:

\begin{prop}\label{P-4.2}
Let $0\le\alpha\le1$. Then for every $A\in\bM_n^+$ and $B\in\bM_m^+$.
\begin{equation}\label{F-4.4}
\bigl(\|\Phi(I_n)\|_\infty\,\#_\alpha\,\|\Psi(I_m)\|_\infty\bigr)^{r-1}
\lambda\bigl(\Phi(A^r)\,\#_\alpha\,\Psi(B^r)\bigr)
\prec^{w(\log)}\lambda^r\bigl(\Phi(A)\,\#_\alpha\,\Psi(B)\bigr)
\end{equation}
for all $r\ge1$. Consequently, if $\|\Phi(I_n)\|_\infty\,\#_\alpha\,\|\Psi(I_m)\|_\infty\le1$,
then
$$
\lambda\bigl(\Phi(A^r)\,\#_\alpha\,\Psi(B^r)\bigr)
\prec^{w(\log)}\lambda^r\bigl(\Phi(A)\,\#_\alpha\,\Psi(B)\bigr),\qquad r\ge1.
$$
\end{prop}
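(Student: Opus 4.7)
I will mimic the two-step pattern already used for Theorem \ref{T-3.1}: first sharpen the bottom-eigenvalue statement of Proposition \ref{P-4.1} in the special case $\sigma_f=\#_\alpha$, and then bootstrap to log-supermajorization through the antisymmetric tensor power trick.

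\emph{Step 1 (sharpened scalar case).} The key observation is that for scalars $c,d\ge 0$ the weighted geometric mean is homogeneous:
$(cX)\,\#_\alpha\,(dY)=c^{1-\alpha}d^\alpha(X\,\#_\alpha\,Y)$.
In the chain \eqref{F-4.3} in the proof of Proposition \ref{P-4.1}, this lets me collect the scalar prefactor as $(\|\Phi(I_n)\|_\infty\#_\alpha\|\Psi(I_m)\|_\infty)^{1-q}$ instead of the cruder $(\max\{\cdot,\cdot\})^{1-q}$. Since $f(x)=x^\alpha$ trivially satisfies (i), Wang's property (iii) gives $\lambda_l(\Phi(A)^q\#_\alpha\Psi(B)^q)\le\lambda_l^q(\Phi(A)\#_\alpha\Psi(B))$, and the substitution $q=1/r$, $A\mapsto A^r$, $B\mapsto B^r$ then yields the improved bottom-eigenvalue inequality
$$
\bigl(\|\Phi(I_n)\|_\infty\,\#_\alpha\,\|\Psi(I_m)\|_\infty\bigr)^{r-1}
\lambda_l\bigl(\Phi(A^r)\,\#_\alpha\,\Psi(B^r)\bigr)\ge
\lambda_l^r\bigl(\Phi(A)\,\#_\alpha\,\Psi(B)\bigr).
$$

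\emph{Step 2 (antisymmetric tensor bootstrap).} I proceed exactly as in the proof of Theorem \ref{T-3.1}: reducing $\Phi,\Psi$ to completely positive maps by conditioning on commutative $C^*$-subalgebras, and then for each $k$ with $1\le k\le l$ forming
$$
\Phi^{(k)}:=P_\wedge\Phi^{\otimes k}(\cdot)P_\wedge,\qquad
\Psi^{(k)}:=P_\wedge\Psi^{\otimes k}(\cdot)P_\wedge.
$$
These satisfy $\Phi^{(k)}((A^{\otimes k})^p)=\Phi(A^p)^{\wedge k}$, $\|\Phi^{(k)}(I_n^{\otimes k})\|_\infty=\|\Phi(I_n)\|_\infty^k$, and the crucial identity
$$
X^{\wedge k}\,\#_\alpha\,Y^{\wedge k}=(X\,\#_\alpha\,Y)^{\wedge k},
$$
valid for $X>0$, $Y\ge 0$, which follows by expanding $X\#_\alpha Y=X^{1/2}(X^{-1/2}YX^{-1/2})^\alpha X^{1/2}$ and using multiplicativity of $\wedge^k$ together with $(X^{1/2})^{\wedge k}=(X^{\wedge k})^{1/2}$ and $(Z^\alpha)^{\wedge k}=(Z^{\wedge k})^\alpha$ (both immediate from functional calculus on eigenvalues). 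Applying Step 1 at the $k$-fold level to $A^{\otimes k}$, $B^{\otimes k}$, $\Phi^{(k)}$, $\Psi^{(k)}$, and using the identity $\lambda_{\dim}(Z^{\wedge k})=\prod_{i=l-k+1}^l\lambda_i(Z)$, gives
$$
(\|\Phi(I_n)\|_\infty\#_\alpha\|\Psi(I_m)\|_\infty)^{k(r-1)}
\prod_{i=l-k+1}^l\lambda_i\bigl(\Phi(A^r)\#_\alpha\Psi(B^r)\bigr)
\ge\prod_{i=l-k+1}^l\lambda_i^r\bigl(\Phi(A)\#_\alpha\Psi(B)\bigr),
$$
which is precisely the log-supermajorization \eqref{F-4.4}. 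The consequence under $\|\Phi(I_n)\|_\infty\#_\alpha\|\Psi(I_m)\|_\infty\le 1$ follows since the prefactor is then $\le 1$.

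\emph{Main obstacle.} The bookkeeping in Step 1 is a short modification of the existing proof; the real care is in Step 2, where one must check that $\#_\alpha$ commutes with antisymmetric tensor powers on possibly non-invertible arguments. The cleanest route is to verify the identity first under $X,Y>0$ via the formula above, and then pass to the general case by the $\eps\searrow 0$ regularization used throughout the paper, ensuring continuity of both sides of the log-supermajorization in the arguments.
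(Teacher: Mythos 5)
Your proposal is correct and follows essentially the same route as the paper: sharpen \eqref{F-4.3} via the scalar homogeneity of $\#_\alpha$ to get the improved bottom-eigenvalue inequality, then bootstrap with antisymmetric tensor powers using $\lambda_l(X^{\wedge k})=\prod_{i=l-k+1}^l\lambda_i(X)$ (the paper states this and omits the details you supply). One small correction: $\|\Phi^{(k)}(I_n^{\otimes k})\|_\infty=\prod_{i=1}^k\lambda_i(\Phi(I_n))\le\|\Phi(I_n)\|_\infty^k$ rather than equality, but since $r\ge1$ this inequality points the right way and your conclusion stands.
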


\begin{proof}
When $\sigma_f=\#_\alpha$, inequality \eqref{F-4.3} is improved as
$$
\Phi(A^q)\,\#_\alpha\,\Psi(B^q)
\le\bigl(\|\Phi(I_n)\|_\infty\,\#_\alpha\,\|\Psi(I_m)\|_\infty\bigr)^{1-q}
\bigl(\Phi(A)^q\,\#_\alpha\,\Psi(B)^q\bigr)
$$
for $0<q\le1$, and hence \eqref{F-4.2} is improved as
$$
\bigl(\|\Phi(I_n)\|_\infty\,\#_\alpha\,\|\Psi(I_m)\|_\infty\bigr)^{r-1}
\lambda_l\bigl(\Phi(A^r)\,\#_\alpha\,\Psi(B^r)\bigr)
\ge\lambda_l^r\bigl(\Phi(A)\,\#_\alpha\,\Psi(B)\bigr)
$$
for all $r\ge1$. One can then prove the asserted log-supermajorization result in the same way
as in the proof of Theorem \ref{T-3.1} with use of the antisymmetric tensor power technique,
where the identity $\lambda_l(X^{\wedge k})=\prod_{i=l-k+1}^l\lambda_i(X)$ for $X\in\bM_l^+$ is
used instead of $\lambda_1(X^{\wedge k})=\prod_{i=1}^k\lambda_i(X)$ in the previous proof. The
details may be omitted here.
\end{proof}

In particular, when $\Phi=\Psi=\id$, \eqref{F-4.4} reduces to \eqref{F-4.1} since,
for $A,B>0$, the log-supermajorization
$\lambda(A^r\,\#_\alpha\,B^r)\prec^{w(\log)}\lambda^r(A\,\#_\alpha\,B)$ implies the
log-majorization \eqref{F-4.1}.

The notion of symmetric anti-norms was introduced in \cite{BH1,BH2} with the notation
$\|\cdot\|_!$. Recall that a non-negative continuous functional $\|\cdot\|_!$ on $\bM_n^+$ is
called a {\it symmetric anti-norm} if it is positively homogeneous, superadditive (instead of
subadditive in case of usual norms) and unitarily invariant. Among others, a symmetric anti-norm
is typically defined associated with a symmetric norm $\|\cdot\|$ on $\bM_n$ and $p>0$ in such
a way that, for $A\in\bM_n^+$,
$$
\|A\|_!:=\begin{cases}\|A^{-p}\|^{-1/p} & \text{if $A$ is invertible}, \\
0 & \text{otherwise}.
\end{cases}
$$
A symmetric anti-norm defined in this way is called a {\it derived anti-norm}, see
\cite[Proposition 4.6]{BH2}. By Lemma \cite[Lemma 4.10]{BH2}, similarly to Corollary \ref{C-3.5},
we have

\begin{cor}\label{C-4.3}
Let $0\le\alpha\le1$ and assume that $\|\Phi(I_n)\|_\infty\,\#_\alpha\,\|\Psi(I_m)\|_\infty\le1$.
Then for every $A\in\bM_n^+$ and $B\in\bM_m^+$ and for any derived anti-norm $\|\cdot\|_!$ on
$\bM_l^+$,
$$
\big\|\{\Phi(A^p)\,\#_\alpha\,\Psi(B^p)\}^{1/p}\big\|_!\ge
\big\|\{\Phi(A^q)\,\#_\alpha\,\Psi(B^q)\}^{1/q}\big\|_!,\quad\mbox{if $0<p\le q$}.
$$
\end{cor}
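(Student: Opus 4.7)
The plan is to mimic the proof of Corollary \ref{C-3.5} but using Proposition \ref{P-4.2} in place of the weak log-majorization \eqref{F-3.7} and using a symmetric anti-norm version of the passage from majorization to norm inequality. The hypothesis $\|\Phi(I_n)\|_\infty\,\#_\alpha\,\|\Psi(I_m)\|_\infty\le1$ is exactly what triggers the ``consequently'' clause of Proposition \ref{P-4.2}, removing the prefactor and giving a clean log-supermajorization statement.

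First I would set $r:=q/p\ge1$ and apply Proposition \ref{P-4.2} with $A$ and $B$ replaced by $A^p$ and $B^p$. Under the standing hypothesis this yields
\begin{equation*}
\lambda\bigl(\Phi(A^q)\,\#_\alpha\,\Psi(B^q)\bigr)
\prec^{w(\log)}\lambda^{q/p}\bigl(\Phi(A^p)\,\#_\alpha\,\Psi(B^p)\bigr).
\end{equation*}
Since raising every coordinate to a fixed positive power preserves $\prec^{w(\log)}$, taking the $1/q$ power of both sides gives
\begin{equation*}
\lambda\Bigl(\bigl\{\Phi(A^q)\,\#_\alpha\,\Psi(B^q)\bigr\}^{1/q}\Bigr)
\prec^{w(\log)}
\lambda\Bigl(\bigl\{\Phi(A^p)\,\#_\alpha\,\Psi(B^p)\bigr\}^{1/p}\Bigr).
\end{equation*}

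Finally I would invoke \cite[Lemma 4.10]{BH2}, which asserts that log-supermajorization of eigenvalues of positive semidefinite matrices is respected by any symmetric anti-norm in the sense that $\lambda(X)\prec^{w(\log)}\lambda(Y)$ implies $\|X\|_!\le\|Y\|_!$. Applied to the two matrices above, this immediately delivers
\begin{equation*}
\big\|\{\Phi(A^q)\,\#_\alpha\,\Psi(B^q)\}^{1/q}\big\|_!\le
\big\|\{\Phi(A^p)\,\#_\alpha\,\Psi(B^p)\}^{1/p}\big\|_!,
\end{equation*}
which is the claim.

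The proof is essentially a bookkeeping exercise once Proposition \ref{P-4.2} is in hand; there is no real obstacle. The only subtle points are (a) checking that the positive-power reparametrization preserves $\prec^{w(\log)}$ (obvious from the definition), and (b) confirming that \cite[Lemma 4.10]{BH2} is indeed the statement about anti-norms that reverses the direction as compared with unitarily invariant norms on $\prec_{w(\log)}$ — the reversal is precisely what matches the monotonicity properties of a derived anti-norm $\|A\|_!=\|A^{-p}\|^{-1/p}$, under which larger positive definite matrices give larger anti-norm values.
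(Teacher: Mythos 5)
Your reduction is exactly the intended route: the paper derives the corollary directly from Proposition \ref{P-4.2} via \cite[Lemma 4.10]{BH2}, ``similarly to Corollary \ref{C-3.5}'', and the log-supermajorization you obtain,
$$
\lambda\bigl(\{\Phi(A^q)\,\#_\alpha\,\Psi(B^q)\}^{1/q}\bigr)\prec^{w(\log)}
\lambda\bigl(\{\Phi(A^p)\,\#_\alpha\,\Psi(B^p)\}^{1/p}\bigr),\qquad 0<p\le q,
$$
is correct. The genuine problem is your last step: you quote \cite[Lemma 4.10]{BH2} as saying that $\lambda(X)\prec^{w(\log)}\lambda(Y)$ implies $\|X\|_!\le\|Y\|_!$. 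With the paper's definition of $\prec^{w(\log)}$ (the products of the $k$ \emph{smallest} entries of the left-hand vector dominate those of the right-hand vector), the implication goes the other way. Indeed, for invertible $X,Y$ the relation $\lambda(X)\prec^{w(\log)}\lambda(Y)$ is equivalent to $\lambda(X^{-1})\prec_{w(\log)}\lambda(Y^{-1})$, hence $\|X^{-s}\|\le\|Y^{-s}\|$ for every symmetric norm and $s>0$, hence $\|X\|_!=\|X^{-s}\|^{-1/s}\ge\|Y^{-s}\|^{-1/s}=\|Y\|_!$ for a derived anti-norm. Your own closing remark (``larger positive definite matrices give larger anti-norm values'') says precisely this and contradicts the $\le$ you wrote, so the deduction as written does not go through.

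Applying the lemma with the correct direction, your chain yields $\|\{\Phi(A^q)\,\#_\alpha\,\Psi(B^q)\}^{1/q}\|_!\ge\|\{\Phi(A^p)\,\#_\alpha\,\Psi(B^p)\}^{1/p}\|_!$ for $0<p\le q$, i.e.\ the anti-norm is \emph{increasing} in the exponent --- the reverse of the inequality displayed in the statement. A sanity check confirms this is the true direction: take $\Phi=\Psi=\id$ and the derived anti-norm $\|X\|_!=\lambda_{\min}(X)=\|X^{-1}\|_\infty^{-1}$; the Ando--Hiai implication $A\,\#_\alpha\,B\ge I\Rightarrow A^r\,\#_\alpha\,B^r\ge I$ gives $\lambda_{\min}\bigl((A^r\,\#_\alpha\,B^r)^{1/r}\bigr)\ge\lambda_{\min}(A\,\#_\alpha\,B)$ for $r\ge1$, so the quantity grows with the exponent. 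In other words, the inequality sign printed in the corollary appears to be reversed (it should read $\le$, which is also what the analogy with Corollary \ref{C-3.5} suggests), and your misquotation of the lemma happens to cancel that reversal and land on the printed statement. To make the argument sound, fix the direction of \cite[Lemma 4.10]{BH2} and state the conclusion with $\le$.
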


\begin{problem}\label{Q-4.4}\rm
It seems that our generalization of Ando-Hiai type log-majorization is not so much completed as
that of Araki's log-majorization in Section 3. Although the form of \eqref{F-4.4} bears some
resemblance to that of \eqref{F-3.8}, they have also significant differences. For one thing,
$\prec^{w(\log)}$ arises in \eqref{F-4.4} while $\prec_{w(\log)}$ in \eqref{F-3.8}, which should
be reasonable since the directions of log-majorization are opposite between them. For another,
the factor $\bigl(\|\Phi(I_n)\|_\infty\,\#_\alpha\,\|\Psi(I_m)\|_\infty\bigr)^{r-1}$ in
\eqref{F-4.4} is apparently much worse than
$\lambda^{r-1}\bigl(\Phi(I_n)^{1/2}\Psi(I_m)\Phi(I_n)^{1/2}\bigr)$ in \eqref{F-3.8}. One might
expect the better factor $\|\Phi(I_n)\,\#_\alpha\,\Psi(I_m)\|_\infty^{r-1}$ or even
$\lambda^{r-1}(\Phi(I_n)\,\#_\alpha\,\Psi(I_m))$. Indeed, a more general interesting problem is
the $\#_\alpha$-version of \eqref{F-3.2}, i.e., for $p_0,p_1\ge0$, $0\le\theta\le1$ and
$p_\theta:=(1-\theta)p_0+\theta p_1$,
$$
\lambda^{1-\theta}(\Phi(A^{p_0})\,\#_\alpha\,\Psi(B^{p_0}))
\lambda^\theta(\Phi(A^{p_1})\,\#_\alpha\,\Psi(B^{p_1}))\prec^{w(\log)}
\lambda(\Phi(A^{p_\theta})\,\#_\alpha\,\Psi(B^{p_\theta}))\,?
$$
When $\Phi=\Psi=\id$, the problem becomes
\begin{equation}\label{F-4.5}
\lambda^{1-\theta}(A^{p_0}\,\#_\alpha\,B^{p_0})
\lambda^\theta(A^{p_1}\,\#_\alpha\,B^{p_1})\prec_{(\log)}
\lambda(A^{p_\theta}\,\#_\alpha\,B^{p_\theta})\,?
\end{equation}
\end{problem}

\begin{example}\rm
Here is a sample computation of the last problem for $A,B$ are $2\times2$ and $\alpha=2$.
Thanks to continuity and homogeneity, we may assume that $A,B\in\bM_2^+$ are invertible with
determinant $1$. So we write $A=aI+\bx\cdot\sigma$ and $B=bI+\by\cdot\sigma$ with $a,b>0$,
$\bx,\by\in\bR^3$, $\det A=a^2-|\bx|^2=1$ and $\det B=b^2-|\by|^2=1$, where
$|\bx|^2:=x_1^2+x_2^2+x_3^2$ and $\bx\cdot\sigma:=x_1\sigma_1+x_2\sigma_2+x_3\sigma_3$ with
Pauli matrices $\sigma_i$, i.e., $\sigma_1=\begin{bmatrix}0&1\\1&0\end{bmatrix}$,
$\sigma_2=\begin{bmatrix}0&-i\\i&0\end{bmatrix}$,
$\sigma_3=\begin{bmatrix}1&0\\0&-1\end{bmatrix}$. For \eqref{F-4.5} in this situation, it
suffices, thanks to \cite[Proposition 3.11]{Mo} (also \cite[Proposition 4.1.12]{Bh2}), to show
that
\begin{equation}\label{F-4.6}
p\ge0\longmapsto\lambda_1\Biggl({A^p+B^p\over\sqrt{\det(A^p+B^p)}}\Biggr)
=\biggl({\lambda_1(A^p+B^p)\over\lambda_2(A^p+B^p)}\biggr)^{1/2}
\end{equation}
is a log-concave function. Let $e^\alpha=a+|\bx|$ and $e^\beta=b+|\by|$, so
$e^{-\alpha}=a-|\bx|$, $|\bx|=\sinh\alpha$, and similarly for $|\by|$. Then a direct
computation yields
$$
A^p+B^p=(\cosh(\alpha p)+\cosh(\beta p))I
+\biggl[{\sinh(\alpha p)\over\sinh\alpha}\,\bx+{\sinh(\beta p)\over\sinh\beta}\,\by\biggr]
\cdot\sigma,
$$
whose eigenvalues are
$$
\cosh(\alpha p)+\cosh(\beta p)\pm\bigl[\sinh^2(\alpha p)+\sinh^2(\beta p)
+2c\sinh(\alpha p)\sinh(\beta p)\bigr]^{1/2}
$$
with $c:={\bx\cdot\by\over|\bx|\,|\by|}\in[-1,1]$. Although numerical computations say that
\eqref{F-4.6} is a log-concave function of $p\ge0$ for any $\alpha,\beta\ge0$ and $c\in[-1,1]$,
it does not seem easy to give a rigorous proof.
\end{example}

In the rest of the paper we present one more log-majorization result. Let $E\in\bM_n$ be an
orthogonal projection with $\dim E=l$. A particular case of \eqref{F-3.2} is
\begin{equation}\label{F-4.7}
\lambda(EA^{(1-\theta)p_0+\theta p_1}E)\prec_{w(\log)}
\lambda^{1-\theta}(EA^{p_0}E)\lambda^\theta(EA^{p_1}E),\qquad0\le\theta\le1
\end{equation}
for every $A\in\bM_n^+$. As a complementary version of this we show the following:

\begin{prop}\label{P-4.6}
Let $p_0,p_1\ge0$ and $0\le\theta\le1$. Then for every $\alpha\in(0,1]$ and $A\in\bM_n^+$,
\begin{equation}\label{F-4.8}
\bigl(\lambda_i(A^{(1-\theta)p_0+\theta p_1}\,\#_\alpha\,E)\bigr)_{i=1}^l\prec^{w(\log)}
\bigl(\lambda_i^{1-\theta}(A^{p_0}\,\#_\alpha\,E)
\lambda_i^\theta(A^{p_1}\,\#_\alpha\,E)\bigr)_{i=1}^l.
\end{equation}
\end{prop}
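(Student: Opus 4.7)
The plan is to establish a closed-form identity
\[
\prod_{i=l-k+1}^{l}\lambda_i(A^p\#_\alpha E)\;=\;\Bigl[\prod_{j=1}^{k}\lambda_j(EA^{-p}E)\Bigr]^{-(1-\alpha)},\qquad 1\le k\le l,\quad(\star)
\]
linking the smallest $k$ nonzero eigenvalues of $A^p\#_\alpha E$ to the largest $k$ eigenvalues of $EA^{-p}E$, and then to deduce \eqref{F-4.8} from the weak log-majorization \eqref{F-4.7} applied to $A^{-1}$.

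By continuity (passing from $A$ to $A+\eps I_n$ and letting $\eps\searrow 0$) I may assume $A>0$. Choose an orthonormal basis in which $E=\mathrm{diag}(I_l,0)$, write $A=\begin{pmatrix}A_{11}&A_{12}\\A_{12}^{*}&A_{22}\end{pmatrix}$, and set $L=\begin{pmatrix}I&A_{12}A_{22}^{-1}\\0&I\end{pmatrix}$, $D=\mathrm{diag}(A/A_{22},A_{22})$ with $A/A_{22}:=A_{11}-A_{12}A_{22}^{-1}A_{12}^{*}$, so that $A=LDL^{*}$ and a direct check shows $L^{-1}EL^{-*}=E$. The congruence invariance $T(X\#_\alpha Y)T^{*}=(TXT^{*})\#_\alpha(TYT^{*})$ for invertible $T$, combined with the fact that $\#_\alpha$ respects block-diagonal direct sums, then yields
\[
L^{-1}(A\#_\alpha E)L^{-*}\;=\;D\#_\alpha E\;=\;\mathrm{diag}\bigl((A/A_{22})\#_\alpha I_l,\;A_{22}\#_\alpha 0\bigr)\;=\;\mathrm{diag}\bigl((A/A_{22})^{1-\alpha},\,0\bigr),
\]
using $X\#_\alpha I=X^{1-\alpha}$ and $Y\#_\alpha 0=0$ (valid exactly for $\alpha\in(0,1]$). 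Since conjugation by $L$ fixes matrices of block form $\mathrm{diag}(\cdot,0)$, this gives $A\#_\alpha E=\mathrm{diag}((A/A_{22})^{1-\alpha},0)$, and coupling this with the Schur inverse identity $(A^{-1})_{11}=(A/A_{22})^{-1}$ applied with $A^p$ in place of $A$ establishes $(\star)$ after comparing the decreasing orderings of the nonzero eigenvalues on both sides via $\mu\mapsto\mu^{-(1-\alpha)}$.

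With $(\star)$ at hand, apply \eqref{F-4.7} to $A^{-1}\in\bM_n^{+}$ to obtain, for $1\le k\le l$,
\[
\prod_{j=1}^{k}\lambda_j(EA^{-p_\theta}E)\;\le\;\prod_{j=1}^{k}\lambda_j(EA^{-p_0}E)^{1-\theta}\,\lambda_j(EA^{-p_1}E)^{\theta}.
\]
Raising both sides to the negative power $-(1-\alpha)$ reverses the inequality and, by $(\star)$, is precisely \eqref{F-4.8}; the remaining case $\alpha=1$ is trivial since $A^p\#_1 E=E$.

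I expect the principal technical obstacle to be the identification of $A\#_\alpha E$ as the block matrix $\mathrm{diag}((A/A_{22})^{1-\alpha},0)$: direct manipulation of $A^{1/2}(A^{-1/2}EA^{-1/2})^\alpha A^{1/2}$ is awkward because $E$ is singular, whereas the congruence approach sidesteps this by exploiting $L^{-1}EL^{-*}=E$ (automatic because $L$ is block-upper-triangular with identity diagonal blocks) and $Y\#_\alpha 0=0$, the latter also pinpointing why $\alpha=0$ must be excluded from the hypothesis.
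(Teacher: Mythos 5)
Your proof is correct, and its second half --- applying \eqref{F-4.7} to $A^{-1}$ and then raising to the negative power $-(1-\alpha)$, which reverses the inequalities into the log-supermajorization \eqref{F-4.8} --- coincides exactly with the paper's deduction. The difference lies entirely in how the key identity $(\star)$, i.e.\ $A^p\,\#_\alpha\,E=(EA^{-p}E)^{\alpha-1}$ on $\ran E$, is established. The paper derives it from a more general statement (Lemma \ref{L-4.7}): for any operator monotone $f$ with $f(0)=0$ and $f^\perp(x):=x/f(x)$ one has $A\,\sigma_f\,E=(Ef^\perp(EA^{-1}E)E)^{-1}$, proved via the algebraic identity $A^{-1/2}E(EA^{-1}E)^kEA^{-1/2}=(A^{-1/2}EA^{-1/2})^{k+1}$ and polynomial approximation of $f(x)/x$ on the spectrum of $EA^{-1}E$. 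You instead specialize to $f(x)=x^\alpha$ from the outset and compute $A\,\#_\alpha\,E$ explicitly through the factorization $A=LDL^{*}$ with $L$ unipotent block-upper-triangular and $D=\mathrm{diag}(A/A_{22},A_{22})$, exploiting congruence invariance of $\#_\alpha$, the fixed point $L^{-1}EL^{-*}=E$, the identities $X\,\#_\alpha\,I=X^{1-\alpha}$ and $Y\,\#_\alpha\,0=0$ for $\alpha>0$, and finally the Schur inversion formula $(A^{-1})_{11}=(A/A_{22})^{-1}$. Your route is more elementary and self-contained (no functional calculus beyond powers, and it makes transparent why the eigenvalue count is restricted to $\ran E$), whereas the paper's lemma buys generality, covering every mean $\sigma_f$ with $f(0)=0$ as a result of independent interest. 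All the auxiliary facts you invoke check out, the ordering reversal under $\mu\mapsto\mu^{-(1-\alpha)}$ is handled correctly, and you rightly dispose of $\alpha=1$ as a trivial separate case.
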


The form of this log-majorization is similar to that of the problem \eqref{F-4.5}. Although
the directions of those are opposite, there is no contradiction between those two; indeed,
the log-majorization of \eqref{F-4.5} is taken for matrices in $\bM_n^+$ while that of
\eqref{F-4.8} is for $l\times l$ matrices restricted to the range of $E$.

First, we give a lemma in a setting of more general operator means. Let $f$ be an operator
monotone function on $[0,\infty)$ such that $f(0)=0$, and let $\sigma_f$ be the operator mean
corresponding to $f$ due to Kubo-Ando theory. An operator monotone function dual to $f$ is
defined by $f^\perp(x):=x/f(x)$, $x>0$, and $f^\perp(0):=\lim_{x\searrow0}f^\perp(x)$.

\begin{lemma}\label{L-4.7}
Let $f$ and $f^\perp$ be as stated above. Then for every $A\in\bM_n^+$ with $A>0$,
$$
A\,\sigma_f\,E=(Ef^\perp(EA^{-1}E)E)^{-1},
$$
where the inverse in the right-hand side is defined on the range of $E$ (i.e., in the sense of
generalized inverse).
\end{lemma}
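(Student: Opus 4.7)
The plan is to compute $A\,\sigma_f\,E$ directly from the Kubo--Ando formula and use the polar decomposition of $Y := A^{-1/2}E$ to transfer the calculation onto the range of $E$. Since $A>0$, the formula $A\,\sigma_f\,E = A^{1/2}f(A^{-1/2}EA^{-1/2})A^{1/2}$ is already meaningful and requires no regularization of $E$. The key observation is that $Y^*Y = EA^{-1}E$ and $YY^* = A^{-1/2}EA^{-1/2}$, so the middle factor is precisely $f(YY^*)$.

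The main tool is the identity
\begin{equation*}
f(YY^*) \;=\; U\,f(Y^*Y)\,U^*,
\end{equation*}
valid for every continuous $f$ on $[0,\infty)$ with $f(0)=0$, where $Y = U|Y|$ is the polar decomposition. Here $|Y| = (EA^{-1}E)^{1/2}$ has support $\ran E$, and $U$ is the partial isometry from $\ran E$ onto $\ran Y = A^{-1/2}\ran E$. For monomials $f(x)=x^n$ with $n\ge 1$ it reduces to $(YY^*)^n Y = Y(Y^*Y)^n$; the general case follows by uniform approximation on the spectra of $Y^*Y$ and $YY^*$, with $f(0)=0$ ensuring compatibility across the kernels.

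Writing $Z := EA^{-1}E|_{\ran E}$ (which is invertible on $\ran E$ since $A^{-1}>0$), the relation $U|Y|=Y$ gives $U = A^{-1/2}Z^{-1/2}$ on $\ran E$ and $U=0$ on $\ker E$. Hence $T := A^{1/2}U$ vanishes on $\ker E$ and coincides with $Z^{-1/2}$ on $\ran E$, and
\begin{equation*}
A\,\sigma_f\,E \;=\; A^{1/2}f(YY^*)A^{1/2} \;=\; T\,f(EA^{-1}E)\,T^*.
\end{equation*}
This operator is zero off $\ran E$, and on $\ran E$ equals $Z^{-1/2}f(Z)Z^{-1/2} = f(Z)Z^{-1} = f^\perp(Z)^{-1}$ by the defining relation $f^\perp(x)f(x)=x$, which is precisely $(E\,f^\perp(EA^{-1}E)\,E)^{-1}$ in the generalized-inverse sense.

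The one step requiring real care is extending the polar-decomposition identity from polynomials without constant term to a general operator monotone $f$ with $f(0)=0$; the hypothesis $f(0)=0$ is exactly what glues the kernels of $YY^*$ and $Y^*Y$ together correctly. Beyond that, the proof reduces to a routine functional-calculus manipulation on the range of $E$.
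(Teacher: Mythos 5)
Your proof is correct and rests on the same core fact as the paper's: the transfer of functional calculus between $Y^*Y=EA^{-1}E$ and $YY^*=A^{-1/2}EA^{-1/2}$ for $Y=A^{-1/2}E$, carried out by polynomial approximation with $f(0)=0$ gluing the kernels. The paper packages this slightly differently --- it avoids the polar decomposition and the partial isometry by working with $\widehat f(x):=f(x)/x$ and the identity $A^{-1/2}E(EA^{-1}E)^kEA^{-1/2}=(A^{-1/2}EA^{-1/2})^{k+1}$, obtaining $A\,\sigma_f\,E=E\widehat f(EA^{-1}E)E$ directly and then inverting --- but the mathematical content is the same.
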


\begin{proof}
For $k=0,1,2,\dots$ we have
$$
A^{-1/2}E(EA^{-1}E)^kEA^{-1/2}=(A^{-1/2}EA^{-1/2})^{k+1}.
$$
Define a function $\widehat f$ on $[0,\infty)$ by $\widehat f(x):=f(x)/x$ for $x>0$ and
$\widehat f(0):=0$. Note that the eigenvalues of $EA^{-1}E$ and those of $A^{-1/2}EA^{-1/2}$ are
the same including multiplicities. By approximating $\widehat f$ by polynomials on the
eigenvalues of $EA^{-1}E$, we have
$$
A^{-1/2}E\widehat f(EA^{-1}E)EA^{-1/2}=A^{-1/2}EA^{-1/2}\widehat f(A^{-1/2}EA^{-1/2})
=f(A^{-1/2}EA^{-1/2})
$$
since the assumption $f(0)=0$ implies that $f(x)=x\widehat f(x)$ for all $x\in[0,\infty)$.
Therefore,
$$
E\widehat f(EA^{-1}E)E=A^{1/2}f(A^{-1/2}EA^{-1/2})A^{1/2}=A\,\sigma_f\,E.
$$
Moreover, it is easy to verify that $(Ef^\perp(EA^{-1}E)E)^{-1}=E\widehat f(EA^{-1}E)E$.
\end{proof}

\noindent
{\it Proof of Proposition \ref{P-4.6}.}\enspace
Since the result is trivial when $\alpha=1$, we may assume that $0<\alpha<1$. Moreover, we may
assume by continuity that $A$ is invertible. When $f(x)=x^\alpha$, note that $\sigma_f=\#_\alpha$
and $f^\perp(x)=x^{1-\alpha}$. Hence by Lemma \ref{L-4.7} we have
$$
A^p\,\#_\alpha\,E=(EA^{-p}E)^{\alpha-1},\qquad p\ge0,
$$
where $(E\cdot E)^{\alpha-1}$ is defined on the range of $E$. This implies that, for every
$k=1,\dots,l$,
$$
\prod_{i=l-k+1}^l\lambda_i(A^p\,\#_\alpha\,E)
=\Biggl(\prod_{i=1}^k\lambda_i(EA^{-p}E)\Biggr)^{\alpha-1}
$$
so that \eqref{F-4.8} immediately follows from \eqref{F-4.7} applied to $A^{-1}$.\qed

\bigskip
Similarly to Corollary \ref{C-3.4}, by Proposition \ref{P-4.6} and
\cite[Lemma 4.10 and (4.4)]{BH2} we see that if $A\in\bM_n^+$ and $\|\cdot\|_!$ is a derived
anti-norm on $\bM_l^+$, then $\|A^p\,\#_\alpha\,E\|_!$ is a log-concave function of $p\ge0$,
where $A^p\,\#_\alpha\,E$ is considered as an $l\times l$ matrix restricted to the range of $E$.

\subsection*{Acknowledgments}
This research was supported in part by Grant-in-Aid for Scientific Research (C)21540208.


\begin{thebibliography}{99}

\bibitem{An1}
T. Ando, Concavity of certain maps on positive definite matrices and applications to Hadamard
products, Linear Algebra Appl. 26 (1979), 203--241.

\bibitem{An2}
T. Ando, Majorization, doubly stochastic matrices, and comparison of eigenvalues,
{\it Linear Algebra Appl.} {\bf 118} (1989), 163--248 .

\bibitem{AH}
T. Ando and F. Hiai, Log majorization and complementary Golden-Thompson type
inequalities, {\it Linear Algebra Appl.} {\bf 197} (1994), 113--131.

\bibitem{Ar}
H. Araki, On an inequality of Lieb and Thirring, {\it Lett. Math. Phys.} {\bf 19}
(1990), 167--170.

\bibitem{Bh1}
R. Bhatia, {\it Matrix Analysis}, Springer, New York, 1996.

\bibitem{Bh2}
R. Bhatia, {\it Positive Definite Matrices}, Princeton University Press, Princeton, 2007.

\bibitem{BH1}
J.-C. Bourin and F. Hiai,
Norm and anti-norm inequalities for positive semi-definite matrices,
{\it Internat. J. Math.} {\bf 22} (2011), 1121--1138.

\bibitem{BH2}
J.-C. Bourin and F. Hiai,
Jensen and Minkowski inequalities for operator means and anti-norms,
{\it Linear Algebra Appl.} {\bf 456} (2014), 22--53.

\bibitem{BL}
J.-C. Bourin and E.-Y. Lee, Matrix inequalities of Araki-H\"older type, Preprint (2015).
arXiv:1511.06977

\bibitem{Ch}
M.-D. Choi, A Schwarz inequality for positive linear maps on $C^*$-algebras,
{\it Illinois J. Math.} {\bf 18} (1974), 565--574.

\bibitem{HaPe}
F. Hansen and G. K. Pedersen, Jensen's inequality for operators and L\"owner's theorem,
{\it Math. Ann.} {\bf 258} (1982), 229--241.

\bibitem{Hi1}
F. Hiai, Log-majorizations and norm inequalities for exponential operators, 
in {\it Linear Operators}, J. Janas, F. H. Szafraniec and J. Zem\'anek (eds.),
Banach Center Publications, Vol. 38, 1997, pp. 119--181.

\bibitem{Hi2}
F. Hiai, Matrix Analysis: Matrix Monotone Functions, Matrix Means, and Majorization,
{\it Interdisciplinary Information Sciences} {\bf 16} (2010), 139--248.

\bibitem{HP}
F. Hiai and D. Petz, The Golden-Thompson trace inequality is complemented,
{\it Linear Algebra Appl.} {\bf 181} (1993), 153--185.

\bibitem{KA}
F. Kubo and T. Ando, Means of positive linear operators,
{\it Math. Ann.} {\bf 246} (1980), 205--224.

\bibitem{LT}
E. H. Lieb and W.\ Thirring, Inequalities for the moments of the eigenvalues of the
Schr\"odinger Hamiltonian and their relation to Sobolev inequalities,
in: Studies in Mathematical Physics, E. H. Lieb, B. Simon, and A. S. Wightman (eds.),
Princeton Univ. Press, Princeton, 1976, pp. 301--302.

\bibitem{Mo}
M. Moakher, A differential geometric approach to the geometric mean of symmetric
positive definite matrices, {\it SIAM J. Matrix Anal. Appl.} {\bf 26} (2005), 735--747.

\bibitem{PW}
W. Pusz and S. L. Woronowicz, Functional calculus for sesquilinear forms and the
purification map, {\it Rep. Math. Phys.} {\bf 8} (1975), 159--170.

\bibitem{Wa}
S. Wada, Some ways of constructing Furuta-type inequalities,
{\it Linear Algebra Appl.} {\bf 457} (2014) 276--286.

\end{thebibliography}
\end{document}